\documentclass[11pt]{amsart}
\title{Open Gromov-Witten Theory and the Crepant Resolution Conjecture}
\author{Renzo Cavalieri and Dustin Ross}
\address{Renzo Cavalieri, Colorado State University, Department of Mathematics, Weber Building, Fort Collins, CO 80523-1874, USA}
\email{renzo@math.colostate.edu}
\address{Dustin Ross, Colorado State University, Department of Mathematics, Weber Building, Fort Collins, CO 80523-1874, USA}
\email{ross@math.colostate.edu}

\pagestyle{plain}

\usepackage {color,graphicx,psfrag, verbatim,amssymb,amscd,enumerate,subfigure}
\usepackage{texdraw,xypic}

\newcommand{\proj}{\mathbb{P}}

\newcommand{\Z}{\mathbb{Z}}

\newcommand{\so}{\mathcal{O}}

\newcommand{\C}{\mathbb{C}}




\newtheorem*{mt1}{Theorem \ref{thm:ocrc}}
\newtheorem*{mt2}{Proposition \ref{prop:regglue}}
\newtheorem*{mt3}{Proposition \ref{prop:orbg}}
\newtheorem*{mt4}{Theorem \ref{thm:clcrc}}
\newtheorem{dummy}{}[section]
\newtheorem{lemma}[dummy]{Lemma}
\newtheorem{proposition}[dummy]{Proposition}
\newtheorem{theorem}[dummy]{Theorem}
\newtheorem{corollary}[dummy]{Corollary}

\theoremstyle{convention}
\newtheorem{convention}[dummy]{Orientation Convention}
\theoremstyle{definition}

\theoremstyle{remark}
\newtheorem{remark}[dummy]{Remark}

\usepackage{graphicx}

\setcounter{tocdepth}2
\begin{document}

\begin{abstract}
We compute open GW invariants for $\mathcal{K}_{\mathbb{P}^1}\oplus\mathcal{O}_{\mathbb{P}^1}$, open orbifold GW invariants for $[\C^3/\Z_2]$, formulate an open crepant resolution conjecture and verify it for this pair.  We show that open invariants can be glued together to deduce the Bryan-Graber closed crepant resolution conjecture for the orbifold $[\mathcal{O}_{\mathbb{P}^1}(-1)\oplus\mathcal{O}_{\mathbb{P}^1}(-1)/\Z_2]$ and its crepant resolution $\mathcal{K}_{\mathbb{P}^1\times\mathbb{P}^1}$.
\end{abstract}

\maketitle
\tableofcontents

\pagebreak

\section{Introduction}\label{sec:intro}

\subsection{Summary of Results}

We investigate two striking (conjectural) features of Gromov-Witten theory.
\begin{description}
\item[Crepant Transformation] the equivalence between GW theories of two targets related by a crepant birational transformation. In particular when the crepant transformation is the resolution of singularities of a Gorenstein orbifold, this is  referred to as the \textit{Crepant Resolution  Conjecture} (\textit{CRC}). 
\item[Gluing] the ability to recover GW invariants for a toric variety/orbifold from open invariants of open subspaces covering the target (\textit{Gluing}). 
\end{description} 
We give a complete and exhaustive description for the specific geometry in Figure \ref{square}. The global quotient 
$\mathfrak{X}=[\mathcal{O}_{\mathbb{P}^1}(-1)\oplus\mathcal{O}_{\mathbb{P}^1}(-1)/\Z_2]$ is a Hard Lefschetz orbifold having $Y=\mathcal{K}_{\mathbb{P}^1\times\mathbb{P}^1}$ (the total space of the canonical bundle of $\mathbb{P}^1\times\mathbb{P}^1$) as its crepant resolution. $\mathfrak{X}$ can be covered by two copies of $[\C^3/\Z_2]$, whose resolutions ($\cong \mathcal{K}_{\mathbb{P}^1}\oplus\mathcal{O}_{\mathbb{P}^1}$) cover $Y$.



\begin{figure}[b]
\xymatrix{
OGW_0(\mathcal{K}_{\mathbb{P}^1}\oplus \mathcal{O}_{\mathbb{P}^1}) \ar[rr]^{\footnotesize\begin{array}{c}Gluing:\\ Prop\ \ref{prop:regglue}\end{array}} \ar[dd]_{\footnotesize\begin{array}{c}OCRC:\\ Thm.\ \ref{thm:ocrc}\end{array}} & & GW_0(\mathcal{K}_{\mathbb{P}^1\times\mathbb{P}^1})\ar[dd]^{\footnotesize\begin{array}{c}CRC:\\ Thm.\ \ref{thm:clcrc}\end{array}}\\
	  	& & \\
	  	OGW_0[\C^3/\Z_2] \ar[uu]\ar[rr]^{\footnotesize\begin{array}{c}{Orb. Gluing:}\\ Prop\ \ref{prop:orbg}\end{array}}  & &\ar[uu] \hspace{0.5cm} GW_0[(\mathcal{O}_{\mathbb{P}^1}(-1)\oplus\mathcal{O}_{\mathbb{P}^1}(-1))/\Z_2]} 
\caption{Master diagram for the paper.}
\label{square}
\end{figure}

The four main results of this paper allow us to ``complete the square".

\begin{mt1}
We make and verify a Crepant Resolution Statement for the open invariants of $[\C^3/\Z_2]$ and $\mathcal{K}_{\mathbb{P}^1}\oplus\mathcal{O}_{\mathbb{P}^1}$.
\end{mt1}
This is the first occurence of a crepant resolution statement for open invariants. We compute the genus $0$ open potential for $[\C^3/\Z_2]$  (Prop. \ref{prop:orboppot}) using the methods of \cite{bc:ooinv}. In order to evaluate invariants for more than one boundary component we generalize Theorem 1 of  \cite{r:dd2}  to the case of two-part hyperelliptic Hodge integrals with an arbitrary number of descendant insertions (Thm. \ref{thm:tphi}). Although it may look like it, we point out that Theorem \ref{thm:tphi} is not an instance of the string equation in the orbifold case. The open potential for $ \mathcal{K}_{\mathbb{P}^1}\oplus\mathcal{O}_{\mathbb{P}^1}$ is computed (Prop. \ref{OGWres}) using the techniques of \cite{kl:oinv}. Some interesting classical combinatorics is required to package the potential in a manageable form.

\begin{mt2}
Closed invariants for an arbitrary toric CY threefold can be obtained by gluing open invariants.
\end{mt2}
We compare the contributions (to the restriction of the virtual fundamental class of the moduli space of stable maps to a given fixed locus) from the multiple covers of the fixed lines with the contributions of discs that glue to maps in that fixed locus. It is worth pointing out that our definition of the disc function is purely local (i.e. it does not depend on the global geometry of the threefold), hence these two contributions are not tautologically equal. It was recently pointed out to us that a similar check of the gluing occurred in  \cite[Appendix B]{df:gluing}.   

\begin{mt3}
Closed invariants for $\mathfrak{X}$ are recovered by gluing open invariants of $[\C^3/\Z_2]$.
\end{mt3}  

In the orbifold case we content ourselves with proving the gluing for the particular geometry that we are studying. Checking that orbifold invariants glue in general is currently under investigation by the second author.

\begin{mt4}
We verify the  CRC  ($\grave{a}$ la Bryan-Graber) for $\mathfrak{X}$ and $Y$.
\end{mt4}

We point out two interesting aspects of this result. First, while the CRC has been verified in many instances (\cite{bg:crc,bgp:crc,bg:hhicrc}), this is the first case in which the Bryan-Graber statement is checked for an orbifold which is not just a representation of a finite group. In a sense we are checking that the Bryan-Graber CRC has indeed a geometric content and is not just a group-theoretic feature of orbifold invariants. Second, we prove Theorem \ref{thm:clcrc} by showing that our open CRC is ``compatible with gluing", thus gathering some positive evidence that the CRC, in the toric case, may be addressed locally (see Section \ref{candm} for a discussion).

\subsection{Context and Motivation}
\label{candm}

The Atiyah-Bott localization theorem is effectively used in Gromov-Witten theory to reduce the computation of GW invariants for a toric target to a sum  of Hodge integrals over loci of fixed maps. Hodge integrals can be evaluated using Grothendieck Riemann Roch and Witten's conjecture, hence the slogan that localization turns toric GW theory into combinatorics. Alas, this slogan is more often than not a camouflaged admission of defeat for algebraic geometers, who are tipically unable to manage the combinatorial complexity and extract meaningful geometric information from GW invariants.   
From a physical point of view, open GW invariants (virtual counts of maps from bordered Riemann Surfaces) arise naturally from the propagation of open strings. Mathematically, they offer the opportunity to tackle the combinatorial complexity of GW invariants by making their study even more local. The strategy of the topological vertex (\cite{akv:ttv}) is to associate certain combinatorial gadgets to each fixed point of a toric variety, and give ``gluing rules" that reconstruct GW invariants. Philosophically (and physically), these gadgets should correspond to open invariants relative to branes intersecting the fixed lines containing the given vertex. In \cite{lllz:amtottv}, a limiting argument is used to motivate a mathematical theory of the topological vertex in terms of relative GW invariants. Katz and Liu (\cite{kl:oinv}) take a different approach towards open invariants: when the target admits an antiholomorphic involution $\sigma$, they define open invariants by picking the $\sigma$-invariant portion of the obstruction theory in ordinary GW theory.

In \cite{bc:ooinv}, Katz and Liu's approach is generalized in two different directions. First, it is noted that the construction can be made local: independently of the global geometry of the target, disc contributions to open invariants are computed by sitting a neighborhood of the fixed (affine) line where the disc is mapping inside a resolved conifold. This gives rise to a local theory that is very similar (and possibly identical) to the mathematical topological vertex of Li-Liu-Liu-Zhou. However it is now not straightforward that open invariants should glue correctly: this is the significance of Proposition \ref{prop:regglue}. The second generalization porters open invariants to the orbifold setting. The second author is currently working on a formulation of a general theory of the orbifold vertex, and of an extension of the gluing results. Our formulation of open invariants bypasses the technical problem that the foundations of relative stable maps to orbifolds have not yet been laid. One could also argue that the involution invariant approach is naturally tuned to the study of orbifold geometry (which is essentially ``locally $G$-invariant geometry").

Our opinion is that the worth of a local theory (especially if defined via localization) should be measured by its success in addressing global questions. One of the most intriguing conjectures in GW theory, the crepant resolution conjecture, predicts a relation between orbifold GW invariants of a Gorenstein orbifold and GW invariants of its crepant resolution (when it exists)\footnote{There are various incarnations of the CRC, of different level of  and generality. Here we only focus of the most concrete (and restrictive) version, which applies to our geometry. A nice survey of this rich story, containing the most general formulation, is \cite{cr:crc}}. A natural question is whether the CRC is compatible with gluing, and can therefore be addressed locally. In this paper we study this question for a simple and yet non-trivial geometry, and give a positive answer. 

Recently, Brini (\cite{b:rtam}) made a proposal, based on open mirror symmetry, on how to relate open invariants under crepant transformations. Verifying that his proposal agrees with Theorem \ref{thm:ocrc} is on our immediate agenda, as it would provide further evidence towards the validity of our program, but most importantly it would validate Brini's proposal as a conjectural formulation of a vertex CRC.

\subsection{Organization of the Paper}
In Section \ref{sec:prelims} we review open Gromov-Witten invariants and describe the computational methods for computing the open invariants of $\mathcal{K}_{\mathbb{P}^1}\oplus \mathcal{O}_{\mathbb{P}^1}$ and $[\C^3/\Z_2]$.  We finish the section by computing explicit formulas for certain hyper-elliptic Hodge integrals which show up in later computations.  Sections \ref{sec:openup} and \ref{sec:opendown} are the computational meat of the paper in which we compute all relevant open invariants.  In Section \ref{sec:openres} we show that the open invariants satisfy the open crepant resolution conjecture.  In section \ref{sec:glue}, we show that open invariants can be glued to obtain closed invariants.  Finally, in section \ref{sec:closedres} we show that the closed CRC for 
$[(\mathcal{O}_{\mathbb{P}^1}(-1)\oplus\mathcal{O}_{\mathbb{P}^1}(-1))/\Z_2]$ can be deduced from the open CRC.

\subsection{Acknowledgements}
Many thanks to  Dagan Karp for pointing our attention to this geometry as a natural first step in our program. To Melissa Liu for helpful comments about the paper and for pointing our attention to existing literature we were unaware of. To Andrea Brini for constant communication on his parallel work.  We are also grateful to Vincent Bouchard, Y.P. Lee, Sara Pasquetti, Yongbin Ruan, Hsian-Hua Tseng for many interesting discussions related to this project. Finally we would like to acknowledge the AIM workshop \textit{Recursion structures in topological string theory and enumerative geometry}, where the idea of a crepant resolution conjecture for open invariants was discussed.

\section{Preliminaries}\label{sec:prelims}

\subsection{Open Invariants}
In \cite{kl:oinv}, Katz and Liu propose a theory for computing open Gromov-Witten invariants, a generalization of ordinary Gromov-Witten theory computing virtual counts of maps from surfaces with boundary satisfying certain boundary conditions.   Consider a Calabi-Yau threefold $X$ and a special Lagrangian submanifold $L$.  Fix integers $g$ and $h$ and a relative homology class $\beta\in H_2(X,L;\Z)$ with $\partial\beta=\sum\gamma_i\in H_1(L,\Z)$.  Then the open Gromov-Witten invariant $N_{\beta;\gamma_1,...,\gamma_h}^{g,h}$
is a virtual count of maps $f:(\Sigma,\partial\Sigma)\rightarrow (X,L)$ satisfying
\begin{itemize}
\item $(\Sigma,\partial\Sigma)$ is a Riemann surface with genus $g$ and $h$ boundary components,
\item $f_*[\Sigma]=\beta$, and
\item $f_*[\partial\Sigma]=\sum\gamma_i$.
\end{itemize}

In order to compute open invariants, Katz and Liu propose an obstruction theory for the moduli space of open stable maps $\overline{\mathcal{M}}_{g,h}(X,L|\beta;\gamma_i)$ (\cite[section 4.2]{kl:oinv}).  Under the assumption that the moduli space can be equipped with a well-behaved torus action, they give an explicit formula for how the corresponding virtual cycle restricts to the fixed locus of the torus action.  A particularly interesting aspect of this theory is that the virtual cycle \textit{does} depend on the torus action.  In other words, different torus actions lead to different invariants.  This reflects the framing dependence of open invariants discussed in \cite{akv:dinst}.

The computational key to the Katz and Liu setup is the assumption that $L$ is the fixed locus of an anti-holomorphic involution.  A map from a bordered Riemann surface mapping boundary into $L$ can then be doubled to a map from a closed Riemann surface (\cite[section 3.3]{kl:oinv}).  Open Gromov-Witten invariants are defined/computed from the involution invariant contributions to the ordinary Gromov-Witten invariants corresponding to the doubled maps.

Katz and Liu then specialize to compute disk invariants of 
$\mathcal{O}_{\mathbb{P}^1}(-1)\oplus \mathcal{O}_{\mathbb{P}^1}(-1)$, where $L$ is the fixed locus of the anti-holomorphic involution $(z,u,v)\rightarrow (1/\bar{z},\bar{v}\bar{z},\bar{u}\bar{z})$.  Key to the computations are the Riemann-Hilbert bundles $L(2d)$ and $N(d)$ over $(D^2,S^1)$ defined in \cite[Examples 3.4.3 and 3.4.4]{kl:oinv}.  The sections of the Riemann-Hilbert bundles are identified  torus-equivariantly to the involution invariant sections of $H^0(\mathbb{P}^1,\mathcal{O}(2d))$ and $H^1(\mathbb{P}^1,\mathcal{O}(-d)\oplus\mathcal{O}(-d))$, respectively.  

Our open invariant computations stem from making Katz and Liu's construction ``local", as we now explain.
We represent a toric Calabi-Yau 3-fold  via its \textit{web-diagram}, a planar trivalent graph where edges correspond to torus invariant lines and vertices to torus invariant points.  Equipping the space with a $\C^*$ action and lifting the action to the moduli space of open stable maps, the fixed loci consists of maps decomposing as
\begin{itemize}
\item compact components of the source curve contracting to the vertices,
\item multiple covers of the fixed lines of the 3-fold, fully ramified over fixed points, and
\item disks mapping with appropriate winding to edges equipped with a lagrangian. 
\end{itemize}  
The contribution from the first two items can be computed using standard Atyiah-Bott localization  whereas the contribution from each disk is computed by applying the Katz-Liu setup to a formal neighborhood of the fixed point where the vertex of the disk is mapped.

\subsection{Orientation Convention}

A subtlety arises in the computations.  Although the sections of $H^0(L(2d))$ are \textit{naturally} isomorphic to the sections of $H^0(\so_{\proj^1}(2d))$, there is not a natural choice of isomorphism between the sections of $H^1(N(d))$ and the sections of $H^1(\so_{\proj^1}(-d)\oplus\so_{\proj^1}(-d))$.  Rather, the latter correspondence depends on a choice of \textit{orientation} for the sections (see \cite[Section 5.2]{kl:oinv}): a $\sigma$ invariant section of  $H^1(\so_{\proj^1}(-d)\oplus\so_{\proj^1}(-d))$ in local coordinates at $0$ has the form
\begin{equation*}
s=\left(\sum_{i=1}^{d-1}\frac{{a_i}}{z^{i}},\sum_{i=1}^{d-1}\frac{\overline{a_i}}{z^{d-i}}\right)=\left(\sum_{j=1}^{d-1}\frac{\overline{b_j}}{z^{d-j}},\sum_{j=1}^{d-1}\frac{{b_j}}{z^{j}}\right).
\end{equation*}
The space of involution invariant sections is identified (torus-equivariantly) with a complex vector space by the first projection if using the coordinates $a_i$, or by second projection if using the coordinates $b_j$.  This choice results in different open invariants: in the first case the weights of the sections are the $\C^\ast$-weights of the sections of the left hand side $\so_{\proj^1}(-d)$, in the second case of the right hand side $\so_{\proj^1}(-d)$. Ultimately the choice of orientation yields a global factor of $(-1)^{d+1}$ where $d$ is the winding of the disk.


In order to track the choice of orientation, we make the following convention. 

\begin{convention}
Throughout the paper, we add an arrow to each edge intersecting a Lagrangian. The corresponding disk contributions are computed identifying the involution invariant sections of $H^1(\so_{\proj^1}(-d)\oplus\so_{\proj^1}(-d))$ via projection to the sections of the bundle to the \textit{left} of the arrow.
\end{convention}


\begin{figure}
\includegraphics[height=2cm]{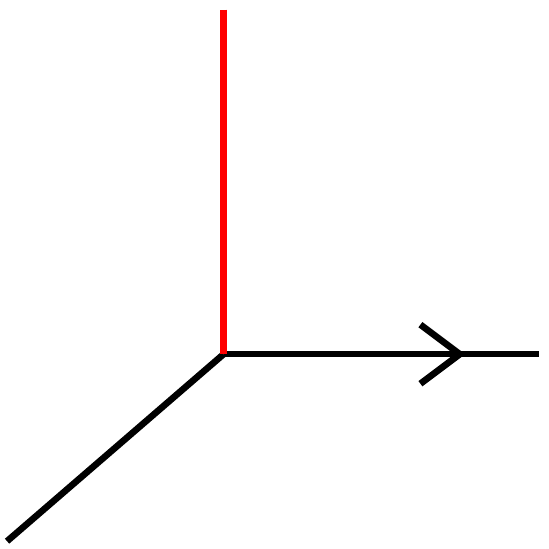}
\caption{$\C^3$ with one oriented half-edge denotes both that we are computing open invariants with disks lying along the horizontal edge and distinguishing that the identifications of $\sigma$ invariant sections is obtained by projecting onto the bundle corresponding to the vertical edge.}
\label{vert}
\end{figure}

In \cite{bc:ooinv}, the methods of Katz and Liu are extended to the orbifolds $[\C^3/\Z_n]$.  Analagous to computing closed \textit{orbifold} Gromov-Witten invariants, the open \textit{orbifold} Gromov-Witten invariants of $[\C^3/\Z_n]$ are defined/computed by considering only the contributions to the open invariants which descend to the quotient.  In both \cite{kl:oinv} and \cite{bc:ooinv}, the open invariants defined via the A-model are verified against B-model predictions.




\begin{figure}
  \begin{center}
    \subfigure[$\mathcal{K}_{\mathbb{P}^1}\oplus\mathcal{O}_{\mathbb{P}^1}$]{\label{fig:edge-a}\includegraphics[scale=0.6]{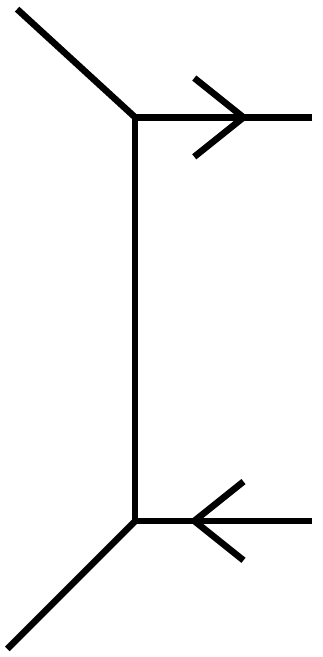}}\hspace{2cm}
    \subfigure[$\mathcal{K}_{\mathbb{P}^1\times\mathbb{P}^1}$]{\label{fig:edge-b}\includegraphics[scale=0.6]{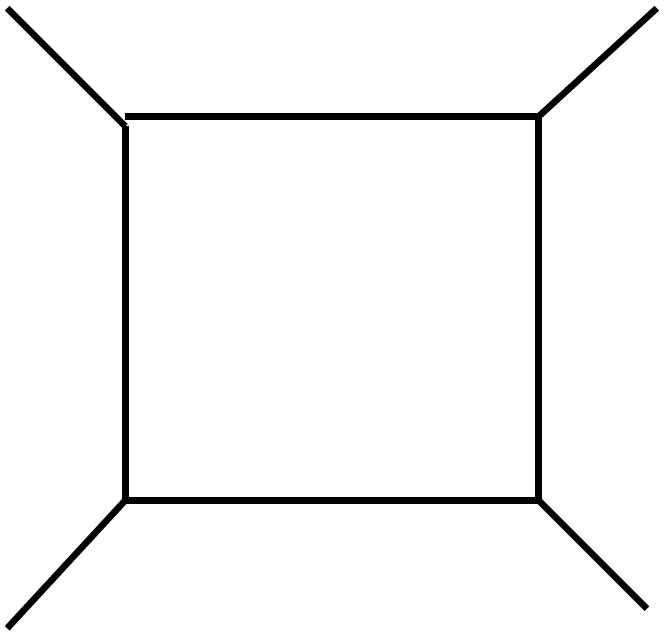}}\\
    \subfigure[{[$\C^3/\Z_2$]}]{\label{fig:edge-c}\includegraphics[scale=0.6]{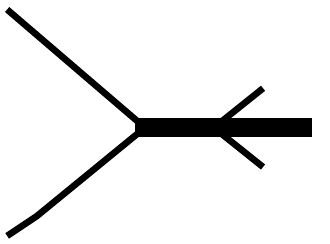}}\hspace{2cm}
    \subfigure[{[$(\mathcal{O}_{\mathbb{P}^1}(-1)\oplus\mathcal{O}_{\mathbb{P}^1}(-1))/\Z_2$]}]{\label{fig:edge-d}\includegraphics[scale=0.65]{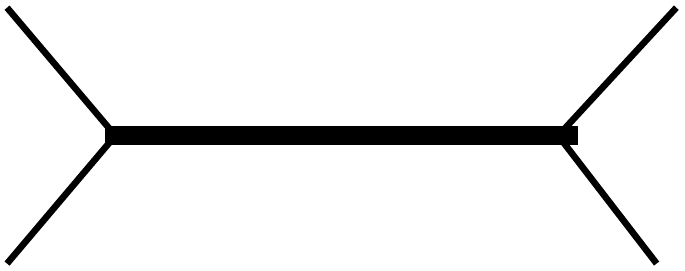}}
  \end{center}
  \caption{$\mathcal{B}\Z_2$ gerbes are denoted in bold and orientations have been chosen using the convention above.}
  \label{spaces}
\end{figure}



\subsection{Hyperelliptic Hodge Integrals}\label{sec:hodge}

In this section we prove a closed formula for a generating function which packages the hyperelliptic Hodge integrals of the form
\begin{equation}
L(g,i,\overline m):=\int_{\overline{\mathcal{M}}_{0;2g+2,0}(\mathcal{B}\Z_2)}\lambda_g\lambda_{g-i}(\overline\psi)^{\overline m}
\end{equation}
where $\overline m$ is a multi-index $(m_1,...,m_l)$, $|\overline m|:= m_1+...+m_l=i-1$, and
\begin{equation*}
\overline\psi^{\overline m}:= \psi_1^{m_1}\cdot...\cdot\psi_l^{m_l}.
\end{equation*}

\begin{remark}
Recall that $\overline{\mathcal{M}}_{0;2g+2,0}(\mathcal{B}\Z_2)$ is the moduli space of maps from genus zero curves into $\mathcal{B}\Z_2$ with $(2g+2)$ twisted marked points.  Each such map corresponds to a (possibly nodal) genus $g$ double cover of the source curve ramified over the marked points.  We have two natural forgetful  maps:
\begin{equation}
\xymatrix{
\overline{\mathcal{M}}_{0;2g+2,0}(\mathcal{B}\Z_2) \ar[r]^{\hspace{1cm}F} \ar[d]_\pi &\overline{\mathcal{M}}_{g}\\
\overline{\mathcal{M}}_{0;2g+2} }
\end{equation}
by sending a map to the corresponding double cover of its source curve.  The lambda classes on $\overline{\mathcal{M}}_{0;2g+2,0}(\mathcal{B}\Z_2)$ are defined to be
\begin{equation*}
\lambda_i:=c_i(F^*\mathbb{E})
\end{equation*}
where $\mathbb{E}$ is the Hodge bundle on $\overline{\mathcal{M}}_{g}$.  The psi classes are defined
via pull-back from $\overline{\mathcal{M}}_{0;2g+2}$.
\end{remark}

For a fixed $i$ and $\overline m$ with $|\overline m|=i-1$, define the generating function
\begin{equation}
\mathcal{L}_{i,\overline m}(x):=\sum_g L(g,i,\overline m)\frac{x^{2g}}{(2g)!}.
\end{equation}
We know from the $\lambda_g\lambda_{g-1}$ computation \cite{fp:lsahiittr,bp:tlgwtoc,bct:gg-1} that 
\begin{equation}
\label{fapala}
\mathcal{L}_{1,\emptyset}=\log\sec\left(\frac{x}{2}\right)
\end{equation}
and we also know from \cite{r:dd2} that 
\begin{equation}\label{base}
\mathcal{L}_{i,(i-1)}=\frac{2^{i-1}}{i!}\mathcal{L}_{1,\emptyset}^i
\end{equation}

The following theorem generalizes (\ref{base}).

\begin{theorem}
\label{thm:tphi}
\begin{equation}
\label{tphi}
\mathcal{L}_{i,\overline m}={m_1+...+m_l \choose m_1,...,m_l}\frac{2^{i-1}}{i!}\mathcal{L}_{1,\emptyset}^i.
\end{equation}
\end{theorem}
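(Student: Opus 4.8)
The plan is to reduce the general multi-index statement to the known case $\overline m = (i-1)$, i.e.\ equation~(\ref{base}), by showing that the generating function $\mathcal{L}_{i,\overline m}$ depends on $\overline m$ only through $|\overline m|$ and the multinomial coefficient. The first step is to establish a \emph{redistribution} identity: for $|\overline m| = i-1$, the value $L(g,i,\overline m)$ is independent of how the total psi-degree $i-1$ is split among the $l$ marked points carrying positive degree, up to the ratio of multinomial coefficients ${i-1 \choose m_1,\dots,m_l}$. Concretely I would compare $\mathcal{L}_{i,\overline m}$ against $\mathcal{L}_{i,(i-1)}$, where in the latter all the psi-weight is loaded onto a single marked point.

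The mechanism I would use is a dilaton/comparison argument on $\overline{\mathcal M}_{0;2g+2}$ combined with the structure of the integrand $\lambda_g\lambda_{g-i}$. The key geometric input is that on $\overline{\mathcal M}_{0;2g+2,0}(\mathcal B\Z_2)$, pulled back from $\overline{\mathcal M}_{0;2g+2}$, the psi classes satisfy the genus-zero comparison relations, and more importantly $\lambda_g\lambda_{g-i}$ vanishes on boundary strata where the double cover degenerates in a way that drops the relevant Hodge-bundle rank. This is exactly the vanishing that powers the $\lambda_g\lambda_{g-1}$ and two-part hyperelliptic computations in \cite{fp:lsahiittr,bp:tlgwtoc,r:dd2}. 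Using these vanishings, moving a box of psi from one marked point to another (via $\psi_a = \psi_a' + [\text{boundary}]$ after pulling back along a forgetful map, or directly via the genus-zero string/dilaton relations on $\overline{\mathcal M}_{0,n}$) produces only correction terms supported on boundary divisors that are killed by $\lambda_g\lambda_{g-i}$. Iterating this lets me freely rearrange the multi-index, picking up precisely the combinatorial factor ${m_1+\dots+m_l \choose m_1,\dots,m_l}$ relative to the single-point normalization $\mathcal{L}_{i,(i-1)}$.

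So the steps, in order, are: (1) set up the two forgetful maps and the definition of $\lambda$, $\psi$ classes from the Remark, and record the relevant boundary-vanishing of $\lambda_g\lambda_{g-i}$ on the hyperelliptic locus; (2) prove the one-box transfer lemma $L(g,i,\dots,m_a,\dots,m_b,\dots) $ versus $L(g,i,\dots,m_a-1,\dots,m_b+1,\dots)$ is controlled by the corresponding multinomial ratio, using the genus-zero psi-comparison together with the vanishing from step~(1); (3) iterate to reduce any $\overline m$ with $|\overline m|=i-1$ to the configuration $(i-1)$, obtaining $\mathcal L_{i,\overline m} = {m_1+\dots+m_l \choose m_1,\dots,m_l}\,\mathcal L_{i,(i-1)}$; (4) substitute (\ref{base}) to conclude (\ref{tphi}).

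The main obstacle I anticipate is step~(2): one must verify that \emph{every} boundary correction term arising when transferring a psi box is genuinely annihilated by $\lambda_g\lambda_{g-i}$ after pushing forward along $F$. The subtlety is that $\lambda_g$ forces the double cover to be of compact type (no non-separating nodes), so the surviving boundary strata of $\overline{\mathcal M}_{0;2g+2}$ split the $2g+2$ ramification points into two groups of \emph{even} size; on such a stratum the product $\lambda_g\lambda_{g-i}$ factors as a sum over the two sides, and one needs a dimension/degree count to see that the psi-correction terms land in the vanishing range (the total $\lambda$-degree $2g-i$ exceeds what the lower-genus pieces can support once a psi class is also inserted). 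Carrying this bookkeeping carefully — rather than the final substitution, which is immediate — is where the real work lies, and it is presumably why the authors flag that this is \emph{not} merely the orbifold string equation.
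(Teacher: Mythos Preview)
Your proposal has a genuine gap, and in fact the vanishing you hope for in step~(2) would contradict the theorem itself. If every boundary correction arising from a one-box psi transfer were annihilated by $\lambda_g\lambda_{g-i}$, you would conclude that $L(g,i,\overline m)$ is literally \emph{independent} of how the total psi-degree $i-1$ is distributed among the marked points; but then the multinomial ${m_1+\dots+m_l \choose m_1,\dots,m_l}$ in (\ref{tphi}) would have to equal $1$ for every $\overline m$, which is false. So the boundary terms \emph{must} carry the content of the formula, and your dimension count does not kill them: on a compact-type boundary stratum with $g_1+g_2=g$, the restriction $\lambda_g\lambda_{g-i}\big|_{\partial} = \sum_{j=1}^{i-1}\lambda_{g_1}\lambda_{g_1-j}\otimes\lambda_{g_2}\lambda_{g_2-(i-j)}$ pairs against the remaining $\psi$-monomial of total degree $i-2$ to give a top-degree class on the product (dimension $(2g_1-1)+(2g_2-1)=2g-2$, matching $(2g-i)+(i-2)$ exactly). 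Nothing vanishes; you recover precisely products $L(g_1,j,\cdot)\,L(g_2,i-j,\cdot)$ with $1\le j\le i-1$. Only for $i=1$ is the sum empty, which is why the $\lambda_g\lambda_{g-1}$ case is special.

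The paper's argument embraces these lower-order terms rather than discarding them. It sets up two auxiliary integrals on $\overline{\mathcal M}_{0;2g+2,0}(\mathbb P^1\times\mathcal B\Z_2,1)$ that vanish for dimension reasons, localizes each to obtain a recursion expressing $\mathcal L_{i,\overline m}$ (respectively $\mathcal L_{i,\overline m'}$, where $\overline m'$ merges the last two entries of $\overline m$) as a bilinear expression in strictly lower $\mathcal L$'s, and then observes that by the inductive hypothesis the two right-hand sides are term-by-term proportional with ratio ${m_l+m_{l+1}\choose m_l}$. This yields $\mathcal L_{i,\overline m}={m_l+m_{l+1}\choose m_l}\mathcal L_{i,\overline m'}$, reducing the length of the multi-index; iterating lands on (\ref{base}). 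A repaired version of your strategy could conceivably run the same induction directly via boundary relations on $\overline{\mathcal M}_{0,2g+2}$, but you would need to \emph{keep} the boundary contributions and control them inductively, not argue them away.
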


\begin{remark}
This formula appeared independently in Danny Gillam's PhD dissertation.  He computationally verified the result for $l\leq 4$.
\end{remark}

\begin{proof}

We use induction on the multi-index $\overline{m}$.  Given a multi-index $\overline{m}=(m_1,...,m_k)$ with $|\overline{m}|=j-1$, we know the result is true if either $j=1$ or $k=1$.    Suppose the lemma holds when 
\begin{enumerate}
\item $j<i$ and
\item $j=i, k \leq l$.  
\end{enumerate}

Under these assumptions, we show (\ref{tphi}) holds when $j=i$ and $k=l+1$.

\noindent{\bf Notation.}  Write $\overline m=(m_1,...,m_l,m_{l+1})$ and set $\overline m'=(m_1,...,m_{l-1},m_l')$ where $m_l':= m_l+m_{l+1}$.    For a subset $A\subseteq \{1,...,l+1\}$, we  write $\overline{m}(A)$ for the multi-index which is equal to $\overline{m}$ in the entries indexed by numbers in $A$ and equal to $0$ in the other entries.  $A^c$  denotes the complement of $A$.  $\overline{m} [k]$ denotes the multi-index $\overline{m}$ with the first entry replaced by $k$.

We prove the recursion by evaluating via localization auxiliary integrals on $\overline{\mathcal{M}}_{0;2g+2,0}(\mathbb{P}^1\times\mathcal{B}\Z_2,1)$.  This moduli space parametrizes double covers of the source curve with a special rational component picked out.  By postcomposing the usual evaluation maps with projection onto the first factor, we have evaluation maps to $\mathbb{P}^1$ which we denote by $e_i$.  The auxiliary integrals are:
\begin{description}
\item[$A1$] \begin{equation*}\int\lambda_g\lambda_{g-i}\overline{\psi}^{\overline{m}(\{1\}^c)}e_{l}^*(0)e_{l+1}^*(0)e_{2g+2}^*(\infty)\end{equation*}
\item[$A2$]  \begin{equation*}\int\lambda_g\lambda_{g-i}\overline{\psi}^{\overline{m'}(\{1\}^c)}e_{l}^*(0)e_{l+1}^*(0)e_{2g+2}^*(\infty)\end{equation*}
\end{description}

\begin{remark}

\begin{enumerate}
\item In each integrand, we  do not include the $\psi_1$ part of the Hodge integral.  The $\psi_1$ classes in the result make an appearance through node smoothing.  The other $\psi$ classes correspond to the marked points with the matching index.
\item We have abused notation in order to make the expression legible. By $\lambda_i$ we intend $c_{g-i}^{eq.}(R^1\pi_*f^*\mathcal{O})$ where the trivial bundle is linearized with $0$ weights: the lambda classes are how these classes restrict to the fixed loci. By $e_i^\ast(0)$ (resp. $e_i^\ast(\infty)$) we denote $c_1^{eq.}(e_{l}^*\mathcal{O}(1))$ linearized with weight $1$ over $0$ and weight $0$ over $\infty$ (resp. $0$ over $0$ and $-1$ over $\infty$). These classes essentially localize to require the corresponding mark point to map over $0$ (resp. $\infty$).
\item The difference in the two auxiliary integrals is that we have ``spread" the $\psi$ classes on the two points fixed over $0$ in two different ways. 
\item Both integrals vanish by dimensional reasons.  In both integrals the degree of the class we integrate is $m_2+...+m_{l+1}+3+2g-i$ and this is  strictly less than $2g+2$ (because $m_1+...+m_{l+1}=i-1$ and $m_1>0$). 
\item Localizing $A1$ yields relation (\ref{R1}) among Hodge integrals where all terms are already known by induction. Localizing $A2$ we get a relation (\ref{R2}) computing one unknown Hodge integral in terms of inductively known ones. Noticing that (\ref{R1}) and (\ref{R2}) are proportional to each other allows one to determine the desired integral.
\end{enumerate}
\end{remark}




Analyzing the obstruction theory via the normalization sequence of the source curve, one sees that the maps in the contributing fixed loci satisfy the following properties (\cite{r:adm} for more details):
\begin{itemize}
\item The preimages of $0$ and $\infty$ in the corresponding double cover must be connected.
\item One distinguished projective line in the source curve maps to the main component of the target with degree $1$.  The corresponding double cover has a rational component over the distinquished projective line.
\item The $l$th and $(l+1)$th marked points must map to $0$ while the $(2g+2)$th marked point must map to $\infty$.  
\end{itemize}
The contributing fixed loci are:
\begin{description}
\item [$F_g$] All marked points except for the $(2g+2)$th map to $0$.  The corresponding double cover contracts a genus $g$ component over $0$ and does not have a positive dimensional irreducible component over $\infty$.  This locus is isomorphic to $\overline{\mathcal{M}}_{0;2g+2,0}(\mathcal{B}\Z_2)$.
\item [$F_{g_1,g_2}$] $2g_1+1$ marked points map to $0$ and $2g_2+1$ marked points map to $\infty$ (this includes the points that are already forced to map to $0$ and $\infty$).  The corresponding double cover contracts a genus $g_1$ component over $0$ and a genus $g_2$ component over $\infty$.  This locus is isomorphic to $\overline{\mathcal{M}}_{0;2g_1+2,0}(\mathcal{B}\Z_2)\times\overline{\mathcal{M}}_{0;2g_2+2,0}(\mathcal{B}\Z_2)$.
\end{description}
The mirror analog of $F_g$ is not in the fixed locus because we are requiring that at least $2$ marked points map to $0$.

The first integral evaluates on the two types of fixed loci to:
\begin{description}
\item[($F_g$)]
\begin{equation*}
\frac{(-1)^i}{t^{m_1}}\int_{\overline{\mathcal{M}}_{0;2g+2,0}(\mathcal{B}\Z_2)}\lambda_g\lambda_{g-i}\overline{\psi}^{\overline{m}}=\frac{(-1)^i}{t^{m_1}}L(g,i,\overline{m})
\end{equation*}
\item[($F_{g_1,g_2}$)]
\begin{align*}
&\frac{2(-1)^i}{t^{m_1}}\sum_{k=1}^{i-1}\sum_{A\subseteq\{2,...,l-1\}}{2g+1-l \choose 2g_1+1-|A|}(-1)^{k-|\overline{m}(A^c)|-1}\\
&\hspace{1.5cm}\cdot\int_{\overline{\mathcal{M}}_{0;2g_1+2,0}(\mathcal{B}\Z_2)}\lambda_{g_1}\lambda_{g_1-i+k}\psi_1^{i-k-|\overline{m}(A) |-1}\overline{\psi}^{\overline{m}(A)}\psi_l^{m_l}\psi_{l+1}^{m_{l+1}}\\
&\hspace{3cm}\cdot\int_{\overline{\mathcal{M}}_{0;2g_2+2,0}(\mathcal{B}\Z_2)}\lambda_{g_2}\lambda_{g_2-k}\psi_1^{k-|\overline{m}(A^c)|-1}\overline{\psi}^{\overline{m}(A^c)}\\
\end{align*}
\end{description}

where we only sum over subsets $A$ which keep the powers of $\psi$ classes nonnegative.  The subset $A$ determines which $\psi$ classes map to $0$ and the binomial coefficient corresponds to the number of ways to distribute the marked points with no corresponding $\psi$ class in the integral.

Now write $\overline{n}_{A,k}$ for the multi-index $\overline{m}(A^c)[k-|\overline{m}(A^c) |-1]$.  The vanishing of the integral and the above computations yield the following relation:
\begin{align}
\label{R1a}
L(g,i,\overline{m})&=2\sum_{g_1}\sum_{k=1}^{i-1}\sum_{A\subseteq\{2,...,l-1\}}{2g+1-l \choose 2g_1+1-|A|}(-1)^{k-|\overline{m}(A^c) |} \nonumber\\
&\hspace{1.5cm}\cdot L(g_1,i-k,\overline{m}-\overline{n}_{A,k})\cdot L(g_2,k,\overline{n}_{A,k})
\end{align}
Evaluating the auxiliary integral for all genera and packaging (\ref{R1a}) in generating function form:
\begin{align}
\label{R1}
&\frac{d^{l-1}}{dx^{l-1}}\mathcal{L}_{i,\overline{m}}= \nonumber\\
&2\sum_{k=1}^{i-1}\sum_{A\subseteq\{2,...,l-1\}}\hspace{-.5cm}
(-1)^{k-|\overline{m}(A^c) |}
\left(\frac{d^{l-1-|A|}}{dx^{l-1-|A|}}\mathcal{L}_{i-k,\overline{m}-\overline{n}_{A,k}}\right)
\left(\frac{d^{|A|}}{dx^{|A|}}\mathcal{L}_{k,\overline{n}_{A,k}}\right)
\end{align}
The second integral leads to a very similar relation:
\begin{align}
\label{R2}
&\frac{d^{l-1}}{dx^{l-1}}\mathcal{L}(i,\overline{m}')=\nonumber\\
&2\sum_{k=1}^{i-1}\sum_{A\subseteq\{2,...,l-1\}}\hspace{-.5cm}
(-1)^{k-|\overline{m}'(A^c) |}
\left(\frac{d^{l-1-|A|}}{dx^{l-1-|A|}}\mathcal{L}_{i-k,\overline{m}'-\overline{n}_{A,k}'}\right)
\left(\frac{d^{|A|}}{dx^{|A|}}\mathcal{L}_{k,\overline{n}'_{A,k}}\right)
\end{align}

By definition, $\overline{n}_{A,k}=\overline{n}'_{A,k}$, so
\begin{equation}
\frac{d^{|A|}}{dx^{|A|}}\mathcal{L}_{k,\overline{n}_{A,k}}=\frac{d^{|A|}}{dx^{|A|}}\mathcal{L}_{k,\overline{n}'_{A,k}}
\end{equation}
Also, the induction hypothesis implies (because $k\geq 1$) that
\begin{equation}
\frac{d^{l-1-|A|}}{dx^{l-1-|A|}}\mathcal{L}_{i-k,\overline{m}-\overline{n}_{A,k}}=\frac{(m_l+m_{l+1})!}{m_l!m_{l+1}!}\frac{d^{l-1-|A|}}{dx^{l-1-|A|}}\mathcal{L}_{i-k,\overline{m}'-\overline{n}_{A,k}'}
\end{equation}

Therefore  the left hand sides of (\ref{R1}) and (\ref{R2}) are term by term proportional and we can conclude,
\begin{equation}
\frac{d^{l-1}}{dx^{l-1}}\mathcal{L}_{i,\overline{m}}=\frac{(m_l+m_{l+1})!}{m_l!m_{l+1}!}\frac{d^{l-1}}{dx^{l-1}}\mathcal{L}_{i,\overline{m}'}.
\end{equation}

Now recall that $l(\overline{m})=l+1$, so in order for $\int\lambda_g\lambda_{g-i}\overline{\psi}^{\overline{m}}$ to be defined, we must have at least $l+1$ marked points in our moduli space.  Thus, in order to get a nontrivial integral, we must have $2g+2\geq l+1$.  All coefficients of monomials of smaller degree  than $x^{l-1}$ in both generating functions vanish and we can conclude that
\begin{align}
\mathcal{L}_{i,\overline{m}}&=\frac{(m_l+m_{l+1})!}{m_l!m_{l+1}!}\mathcal{L}_{i,\overline{m}'} \nonumber \\
&=\frac{(m_l+m_{l+1})!}{m_l!m_{l+1}!}{m_1+...+m_l' \choose m_1,...,m_l'}\frac{2^{i-1}}{i!}\mathcal{L}_{1,\emptyset}^i \nonumber \\
&={m_1+...+m_{l+1} \choose m_1,...,m_{l+1}}\frac{2^{i-1}}{i!}\mathcal{L}_{1,\emptyset}^i
\end{align}
where we use the induction hypothesis again on the second equality.
\end{proof}

All $\mathcal{L}_{i,\overline m}$ can be further packaged in one jumbo generating function (with infinitely many symmetric variables $q_i$ keeping track of all possible descendant insertions): 
\begin{equation}
\mathcal{L}(x,\overline{q}):=\sum_{i,\overline m}\mathcal{L}_{i,\overline m}\overline{q}^{\overline{m}}
\end{equation}

\begin{corollary}\label{gen}
\begin{equation}
\mathcal{L}=\frac{1}{\left(2\sum{q_i}\right)}\exp\left({\left(2\sum{q_i}\right)\mathcal{L}_{1,\emptyset}}\right)=\frac{1}{2\sum{q_i}}\sec^{2\sum{q_i}}{\left(\frac{x}{2}\right)}
\end{equation}
\end{corollary}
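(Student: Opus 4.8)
The plan is to feed the closed form of Theorem~\ref{thm:tphi} directly into the definition of $\mathcal{L}(x,\overline q)$ and recognize the result as an exponential. Since Theorem~\ref{thm:tphi} shows that $\mathcal{L}_{i,\overline m}$ depends on the multi-index $\overline m=(m_1,\dots,m_l)$ only through its weight $|\overline m|=i-1$ and through the multinomial coefficient $\binom{m_1+\dots+m_l}{m_1,\dots,m_l}$, I would first reorganize the defining double sum by the value $n:=|\overline m|$ (equivalently $n=i-1$):
\begin{equation*}
\mathcal{L}(x,\overline q)=\sum_{i,\overline m}\mathcal{L}_{i,\overline m}\,\overline q^{\,\overline m}
=\sum_{n\ge 0}\frac{2^{n}}{(n+1)!}\,\mathcal{L}_{1,\emptyset}^{\,n+1}\sum_{|\overline m|=n}\binom{m_1+\dots+m_l}{m_1,\dots,m_l}\overline q^{\,\overline m}.
\end{equation*}

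Next I would collapse the inner sum. Because $\overline q^{\,\overline m}$ is precisely the monomial attached to $\overline m$ in the symmetric bookkeeping of descendant insertions, $\sum_{|\overline m|=n}\binom{m_1+\dots+m_l}{m_1,\dots,m_l}\overline q^{\,\overline m}$ is just the multinomial expansion of $\bigl(\sum_i q_i\bigr)^{n}$. Writing $S:=\sum_i q_i$, this gives
\begin{equation*}
\mathcal{L}=\sum_{n\ge 0}\frac{2^{n}}{(n+1)!}\,\mathcal{L}_{1,\emptyset}^{\,n+1}\,S^{n}
=\frac{1}{2S}\sum_{n\ge 0}\frac{\bigl(2S\,\mathcal{L}_{1,\emptyset}\bigr)^{n+1}}{(n+1)!}
=\frac{1}{2S}\Bigl(\exp\bigl(2S\,\mathcal{L}_{1,\emptyset}\bigr)-1\Bigr),
\end{equation*}
the last step being the identity $\sum_{n\ge0}y^{n+1}/(n+1)!=e^{y}-1$. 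Substituting $\mathcal{L}_{1,\emptyset}=\log\sec(x/2)$ from~(\ref{fapala}) turns $\exp\bigl(2S\,\mathcal{L}_{1,\emptyset}\bigr)$ into $\sec^{2S}(x/2)$, which is the asserted formula; the two forms $\tfrac{1}{2S}\exp(2S\,\mathcal{L}_{1,\emptyset})$ and $\tfrac{1}{2S}\bigl(\exp(2S\,\mathcal{L}_{1,\emptyset})-1\bigr)$ agree as power series in the $q_i$, differing only by the polar term $\tfrac1{2S}$.

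The computation is short, so I do not expect a serious obstacle: the only thing requiring care is the bookkeeping in the two displays — verifying that regrouping the double sum by $n=|\overline m|$ and invoking the multinomial theorem really produces $S^{n}$ with the correct multiplicities for the intended indexing of $\overline q^{\,\overline m}$, and keeping straight the shift between the subscript $i$ of $\mathcal{L}_{i,\overline m}$ and the summation index $n+1$ in the exponential. This shift, together with the $2^{i-1}$ supplied by Theorem~\ref{thm:tphi}, is exactly what assembles the prefactor $\tfrac1{2S}$ and the argument $2S\,\mathcal{L}_{1,\emptyset}$ of the exponential.
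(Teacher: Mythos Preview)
Your proposal is correct and follows essentially the same approach as the paper, which simply says the first equality ``follows immediately from Theorem~\ref{thm:tphi}'' and the second comes from substituting~(\ref{fapala}). You have merely filled in the details of that immediate deduction: reorganize by $n=i-1$, apply the multinomial theorem to collapse the $\overline m$-sum, and recognize the exponential series. Your remark about the discrepancy between $\tfrac{1}{2S}\exp(2S\,\mathcal{L}_{1,\emptyset})$ and $\tfrac{1}{2S}\bigl(\exp(2S\,\mathcal{L}_{1,\emptyset})-1\bigr)$ is accurate and in fact more precise than the paper's statement --- the stated formula carries a spurious polar term $\tfrac{1}{2S}$ that is not present in the power series $\mathcal{L}$, and your version is the literally correct one.
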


\begin{proof}  
The first equality follows immediately from theorem (\ref{thm:tphi}). The second is obtained by plugging (\ref{fapala}) for $\mathcal{L}_{1,\emptyset}$.

\end{proof}

\section{Open Gromov-Witten Invariants of $\mathcal{K}_{\mathbb{P}^1}\oplus\mathcal{O}_{\mathbb{P}^1}$}\label{sec:openup}
In this section we compute the open GW invariants of $\mathcal{K}_{\mathbb{P}^1}\oplus\mathcal{O}_{\mathbb{P}^1}$. 
We give the space a $\C^*$ action with (Calabi-Yau) weights as in Figure \ref{fig:weights}.

\begin{figure}
	\centering
		\includegraphics[height=3.5cm]{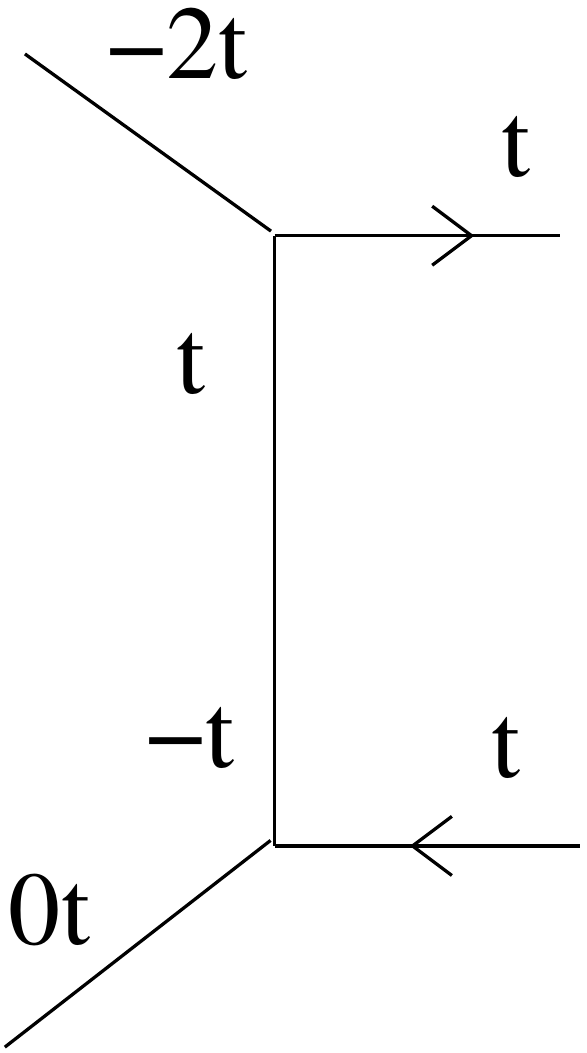}
	\caption{The web diagram for $\mathcal{K}_{\mathbb{P}^1}\oplus\mathcal{O}_{\mathbb{P}^1}$, and the specialized toric weights.}
	\label{fig:weights}
\end{figure}

In local coordinates at the top vertex, the action is defined by $\lambda\cdot (z,u,v)=(\lambda\cdot z, \lambda^{-2}\cdot u, \lambda\cdot v)$.  Similarly for the bottom vertex. The $\C^*$ fixed maps are quite easy to understand:

\begin{itemize}
\item The source curve consists of a genus $0$ (possibly nodal) closed curve along with attached disks.
\item The non-contracted irreducible components of the closed curve must be multiple covers of the torus invariant $\mathbb{P}^1$.
\item The disks must map to the fixed fibers of the trivial bundle with prescribed windings at the Lagrangians.
\end{itemize}

Analyzing the obstruction theory via the normalization sequence of the source curve, one sees that the $0$ weight at the bottom vertex limits the possible contributing maps in the following ways:

\begin{itemize}
\item Maps with positive dimensional components contracting to the bottom vertex do not contribute.
\item Maps with nodes mapping to the bottom vertex contribute only if the node connects a $d$-fold cover of the invariant $\mathbb{P}^1$ to a disk with winding $d$.
\end{itemize}

Fixed loci $F_{\Gamma}$ are indexed by localization graphs as in Figure \ref{localgraph}.  The combinatorial data is given by three multi-indices:
\begin{itemize}
\item $k_1,...,k_l$ the degrees of the multiple covers of the invariant $\mathbb{P}^1$ which do not attach to a disk at the bottom vertex.
\item $d_1,...,d_m$ the winding profile of the disks with origin mapping to the top vertex.
\item $d_{m+1},...,d_n$ the winding profile of the disks with origin mapping to the bottom vertex or equivalently if $n>1$ these are the degrees of the multiple covers of the invariant $\mathbb{P}^1$ which do attach to a disk at the bottom vertex.  
\item If $n=1$, we have the possibility of maps from a single disk mapping the origin to the bottom vertex, we label the locus of such maps $\Gamma'$.
\end{itemize} 

With the given multi-indices, the fixed locus $F_{\Gamma}$ is isomorphic to a finite quotient of $\overline{\mathcal{M}}_{0,n+l}$ where we interpret $\overline{\mathcal{M}}_{0,1}$ and $\overline{\mathcal{M}}_{0,2}$ as points.  Define the contribution from a fixed locus $\Gamma$ to be
\begin{equation}
OGW(\Gamma):=\int_{F_{\Gamma}}\frac{i^*[\overline{\mathcal{M}}]^{\text{vir}}}{e(N_{\text{vir}})}
\end{equation}
where $i^*[\overline{\mathcal{M}}]^{\text{vir}}$ is the restriction of the virtual fundamental class (proposed in \cite{kl:oinv}) to the fixed locus and $N_{\text{vir}}$ denotes the virtual normal bundle of $F_{\Gamma}$ in the moduli space of stable maps.

\begin{figure}
\includegraphics[height=3.5cm]{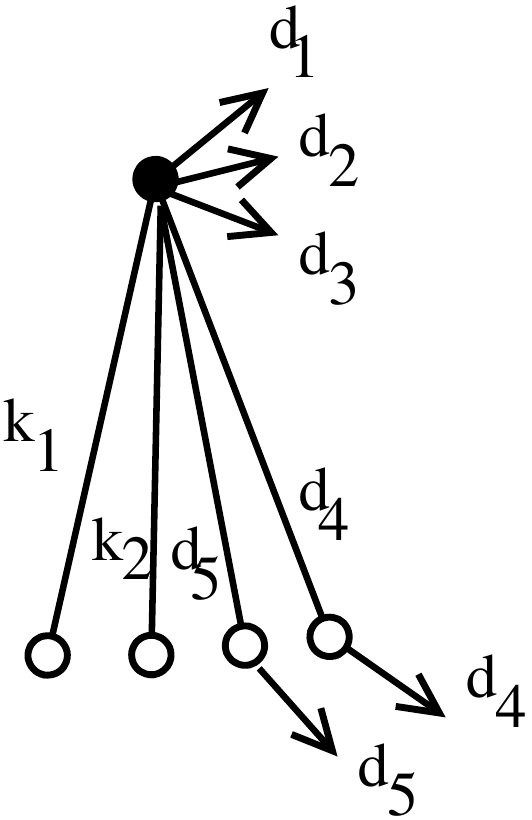}
\caption{The open localization graphs have bi-colored vertices to keep track of which vertex components contract to, and decorated arrows to represent disks mapping with given winding.}
\label{localgraph}
\end{figure}

In order to package the invariants in the Gromov-Witten potential, we assign the following formal variables:

\begin{itemize}
\item $q$ tracks the degree of the map on the base $\mathbb{P}^1$
\item $y_i^{(t)}$ tracks the number of disks with winding $i$ at the top vertex
\item $y_i^{(b)}$ tracks the number of disks with winding $i$ at the bottom vertex
\item $x$ tracks insertions of the nontrivial cohomology class (conveniently this class is a divisor).
\end{itemize}

The open potential is computed by adding the contributions of all fixed loci:
\begin{align}\label{OGW1}
OGW_{\mathcal{K}_{\mathbb{P}^1}\oplus\mathcal{O}_{\mathbb{P}^1}}(x,q,y_i^{(t)},y_i^{(b)})&=\sum_{\Gamma'}OGW(\Gamma')y_d^{(b)} \nonumber \\
&\hspace{-2cm}+\sum_{\Gamma\neq\Gamma'}OGW(\Gamma)(qe^x)^{k+d_{m+1}+...+d_n}y_{d_1}^{(t)}\cdot...\cdot y_{d_m}^{(t)}y_{d_{m+1}}^{(b)}\cdot...\cdot y_{d_n}^{(b)}
\end{align}

In (\ref{OGW1}), $\Gamma'$ denotes graphs consisting of a single white vertex and arrow labelled with winding $d$.
For non-degenerate graphs $\Gamma\not=\Gamma'$, we denote by $\overline{OGW}(\Gamma)$ the contribution to the potential from the fixed locus indexed by $\Gamma$, including invariants with any number of divisor insertions.
Following the obstruction theory for open invariants proposed in \cite{kl:oinv}, $OGW(\Gamma)$ are computed using the following ingredients: the euler class of the push-pull of the tangent bundle, the euler class of the normal bundle of $F_{\Gamma}$ in the moduli space of stable maps, and all relevant automorphisms of the map:
\begin{equation}
\frac{1}{|\text{glob. aut.}|}\int_{F_{\Gamma}}\frac{e(-R^\bullet\pi_*f^*T_{\mathcal{K}_{\mathbb{P}^1}\oplus\mathcal{O}_{\mathbb{P}^1}})\cdot(\text{inf. aut.})}{(\text{smoothing of nodes})}
\end{equation}

For convenience, we organize the computation on each locus $\Gamma$ into three parts:
\begin{itemize}
\item \textbf{Closed Curve:} This  consists of a closed curve contracting to the upper vertex as well as multiple covers of the torus fixed $\mathbb{P}^1$.  We choose not to include the $d$-covers of the fixed line which are attached to a disk mapping with winding $d$ to the bottom vertex.  The contracted component contributes $(-2t^3)^{-1}$ from the push-pull of the tangent bundle and each $k$-cover contributes \begin{equation}\frac{-t}{k^2}\frac{eH^1(\mathcal{O}(-2k))}{eH^0(\mathcal{O})eH^0(\mathcal{O}(2k))}=\frac{(-1)^{k}}{tk^2}{2k-1 \choose k}.\end{equation} Here we have included both the global automorphism of the $k:1$ cover and the infinitesimal automorphism at the point ramified over the bottom vertex.
\item \textbf{Disks:}  A disk can either be mapped to the top or the bottom vertex.  Following Katz and Liu \cite{kl:oinv}, the contribution of a disk mapping to the top vertex with winding $d$ is given by
\begin{equation}
\frac{1}{d}\frac{eH^1(N(d))}{eH^0(L(2d))}=\frac{(-1)^{d+1}}{td}{2d-1 \choose d}
\end{equation}
where $L(2d)$ and $N(d)$ are defined in Examples 3.4.3. and 3.4.4 of \cite{kl:oinv}.  We have divided the contributions in a way that the contribution of a disk mapping to the bottom vertex also includes the contribution of the multiple cover attaching it to the contracted component.  The reason for this is that the combined contribution becomes
\begin{align}
&\frac{1}{d^2}\frac{eH^1(\mathcal{O}(-2d))}{eH^0(\mathcal{O})eH^0(\mathcal{O}(2d))}\frac{eH^1(N(d))}{eH^0(L(2d))}\frac{eH^0(N_{/X})}{\frac{t}{d}-\frac{t}{d}} \nonumber \\
&=\frac{1}{d^2}\frac{(-1)^{d+1}}{t^2}{2d-1\choose d}\frac{1}{t}\frac{-0t^3}{\frac{t}{d}-\frac{t}{d}} \nonumber \\
&=\frac{(-1)^{d+1}}{td}{2d-1\choose d}
\end{align}
which is the same as the contribution of the disk at the top vertex.  

\begin{remark}
In order to interpret the expression $\frac{-0}{1-1}$ in the above equations, recall that it arises as $\frac{s_1s_2s_3}{s_1+s_2}$ where the $s_i$ sum to $0$.  As $s_3\rightarrow 0$, the quotient tends to $-s_1s_2$.
\end{remark}

\item \textbf{Nodes:} Since we have already accounted for the nodes at the bottom vertex (those attaching winding $d$ disks to $d:1$ covers), this piece  only contains the contribution from nodes at the top vertex.  For each such node connecting either a disk of winding $d$ or a curve of degree $d$ to the contracted component we  get a contribution of $-2t^3$ from the push-pull of the tangent sheaf and a contribution of $\frac{1}{(\frac{t}{d}-\psi_i)}$ from node smoothing.
\end{itemize}

Putting the pieces together:

\begin{align}
\label{Step2}
OGW(\Gamma)=&\frac{1}{|\text{Aut}(\Gamma)|}\prod_{i=1}^l\frac{(-1)^{k_i}}{tk_i^2}{2k_i-1 \choose k_i}\prod_{i=1}^n\frac{(-1)^{d_i+1}}{td_i}{2d_i-1 \choose d_i} \nonumber\\
&\hspace{1cm}\cdot(-2t^3)^{l+n-1}\int_{\overline{M}_{0,n+l}}\frac{1}{\prod(\frac{t}{k_i}-\psi_i)\prod(\frac{t}{d_i}-\psi_{i+l})}.
\end{align}
where Aut$(\Gamma)$ is the product of the automorphisms of the ordered tuples $(k_1,...,k_l)$, $(d_1,...,d_m)$, and $(d_{m+1},...,d_n)$. 

Applying the string equation to the integral and simplifying, (\ref{Step2}) becomes

\vspace{0.3cm}
$OGW(\Gamma) = $
\begin{equation}\label{OGW2}
 \frac{-2^{l+n-1}}{|\text{Aut}(\Gamma)|}\Bigg[\prod_{i=1}^l\frac{(-1)^{k_i+1}}{k_i}{2k_i-1 \choose k_i}\Bigg]\Bigg[\prod_{i=1}^n(-1)^{d_i}{2d_i-1 \choose d_i}\Bigg](d+k)^{l+n-3}
\end{equation}
where $d=\sum d_i$ and $k=\sum k_i$.  

Recall now that the contribution of a disk is the same regardless of whether it maps to the top or bottom Lagrangian.  Therefore, if we let $\Gamma(\bar{d};\bar{k})$ denote all $\Gamma\neq\Gamma'$ with winding profile $\bar{d}=(d_1,...,d_n)$ and fixed $\bar{k}=(k_1,...,k_l)$, we can attach the formal variables and compute:

\begin{align}
\label{undo}
&\sum_{\Gamma\in\Gamma(\bar{d};\bar{k})}\overline{OGW}(\Gamma)=\frac{-2^{l+n-1}}{|\text{Aut}(\overline{d})|}\prod_{i=1}^n\left(y_{d_i}^{(t)}+y_{d_i}^{(b)}(qe^x)^{d_i}\right)\prod_{i=1}^n(-1)^{d_i}{2d_i-1 \choose d_i}\nonumber \\
&\hspace{1cm}\cdot\frac{1}{|\text{Aut}(\overline{k})|}\prod_{i=1}^l\frac{(-1)^{k_i+1} (qe^x)^{k_i}}{k_i}{2k_i-1 \choose k_i}(d+k)^{l+n-3}
\end{align}

We now sum over all $\bar k$ with $\sum k_i=k$.  In order to do this, set
\begin{align}
F(X,Y)  := \exp\left(\sum_{\kappa\geq 1}\frac{(-1)^{\kappa+1}}{\kappa}{2\kappa-1 \choose \kappa}X^\kappa Y\right)\nonumber\\
   =  \sum_{l,k}\sum_{\overline{k}}\frac{1}{|\text{Aut}(\overline{k})|}\left[\prod_{i=1}^l \frac{(-1)^{k_i+1}}{k_i}{2k_i-1 \choose k_i}\right]X^kY^l
\end{align}
where the second sum is over all $l$-tuples $\overline{k}=(k_1,...,k_l)$ with $\sum k_i=k$.  The sum of all contributions with fixed winding profile $(d_1,...,d_n)$ and with $(k_1,...,k_l)$ satisfying $\sum k_i=k$ is obtained by specializing $Y=2(d+k)$ and multiplying the coefficient of $X^k$ by an appropriate factor:  
\begin{align}\label{OGW4}
\sum_{|\bar{k}|=k}\sum_{\Gamma\in\Gamma(\bar{d};\bar{k})}\overline{OGW}(\Gamma)=\frac{-2^{n-1}}{|\text{Aut}(\bar{d})|}\prod_{i=1}^n\left(y_{d_i}^{(t)}+y_{d_i}^{(b)}(qe^x)^{d_i}\right)\prod_{i=1}^n(-1)^{d_i}{2d_i-1 \choose d_i}\nonumber \\
 \hspace{2cm} \cdot (qe^x)^k(d+k)^{n-3} [F(X,2(d+k))]_{X^k}.
\end{align}


To handle (\ref{OGW4}), we find a closed form  expression for $F$. Start with the known generating function
\begin{equation}
\sum_{k\geq 1}{2k-1 \choose k}(-1)^kX^k=\frac{1}{2}\cdot\frac{1-\sqrt{1+4X}}{\sqrt{1+4X}}
\end{equation}
If we divide by $-X$ and formally integrate term by term (imposing that the constant term is $0$), we get
\begin{equation}
\sum_{k\geq 1}\frac{(-1)^{k+1}}{k}{2k-1 \choose k}X^k=\ln\left(\frac{1}{2}(1+\sqrt{1+4X})\right)
\end{equation}
Finally, we can write 
\begin{align}
F&= \exp\left(Y\ln\left(\frac{1}{2}(1+\sqrt{1+4X})\right)\right) \nonumber\\
&=\left[\frac{1}{2}(1+\sqrt{1+4X})\right]^Y
\end{align}

There are a few interesting comments to make at this point:
\begin{itemize}
\item Setting $G:=\frac{1}{2}(1+\sqrt{1+4X})$, we see that $G=1+X\cdot C(X)$ where $C(X)$ is the generating function for the Catalan numbers.
\item $G$ satisfies the recursive relation $G^n=G^{n-1}+XG^{n-2}$.
\item It is easy to see that the recursion and the relation between $G$ and the Catalan numbers are equivalent to the array of coefficients of $G^i$ taking on a slight variation of two classical combinatorial objects, as illustrated in Figure \ref{fig:catalan}. Here ``slight variation'' is probably best described by looking at the first few terms in Table \ref{tab:catalan}.

\begin{figure}[b]
	\centering
		\includegraphics[width=0.35\textwidth]{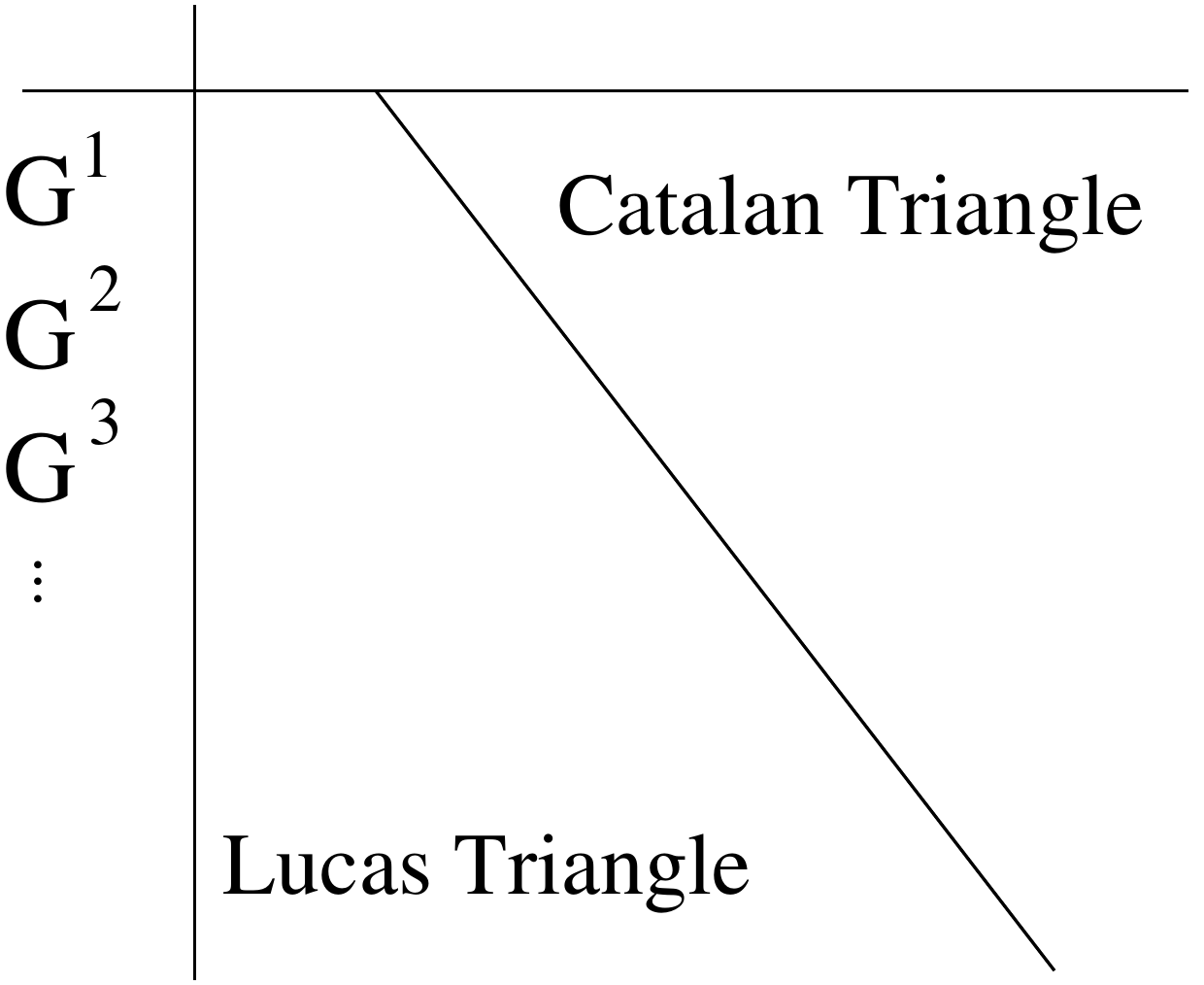}
	\caption{The coefficients of $G^n$ as classical combinatorial numbers.}
	\label{fig:catalan}
\end{figure}


\begin{table}
$$
\begin{array} {ccccccccccc}
& 1 & x & x^2 & x^3 & x^4 & x^5 & x^6 & x^7 & x^8 & x^9 \\
G & 1 & 1 & -1 & 2 & -5 & 14 & -42 & 132 & -429 & 1430\\
G^2 & 1&2&-1&2&-5&14&-42&132&-429&1430\\
G^3&1&3&0&1&-3&9&-28&90&-297&1001\\
G^4&1&4&2&0&-1&4&-14&48&-165&572\\
G^5&1&5&5&0&0&1&-5&20&-75&275\\
G^6&1&6&9&2&0&0&-1&6&-27&110\\
G^7&1&7&14&7&0&0&0&1&-7&35\\
G^8&1&8&20&16&2&0&0&0&-1&8\\
G^9&1&9&27&30&9&0&0&0&0&1\\
G^{10}&1&10&35&50&25&2&0&0&0&0\\
\end{array}
$$
\caption{The first coefficients of the series of $G^n$.}
\label{tab:catalan}
\end{table}
\end{itemize}

Using the recursion and induction, one easily proves the following lemma.

\begin{lemma}
If $d>0$, the $X^k$ coefficient of $G^{2(d+k)}$ is \begin{equation}{k+(2d-1) \choose 2d-1}\frac{d+k}{d}.\end{equation} The $X^k$ coefficient of $G^{2k}$ is $2$.
\end{lemma}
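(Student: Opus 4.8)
The plan is to work directly with the recursion $G^n = G^{n-1} + X G^{n-2}$ established above, together with the closed form $G = \frac{1}{2}(1+\sqrt{1+4X})$. I would first record the two facts we need: (i) the constant term of $G^n$ is $1$ for all $n \geq 0$, and (ii) the linear ($X^1$) coefficient of $G^n$ is $n$ (immediate from $G = 1 + X\cdot C(X)$, or by one step of the recursion). Let me write $c_k(n)$ for the $X^k$ coefficient of $G^n$. The recursion gives $c_k(n) = c_k(n-1) + c_{k-1}(n-2)$ for all $k \geq 1$.

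For the main claim, fix $d > 0$ and define the candidate $a_k := \binom{k+2d-1}{2d-1}\frac{d+k}{d}$; I want to show $a_k = c_k(2(d+k))$. The cleanest route is induction on $k$. The base case $k=0$ reads $a_0 = \binom{2d-1}{2d-1}\cdot 1 = 1 = c_0(2d)$, which is fact (i). For the inductive step, I would \emph{not} try to relate $c_k(2(d+k))$ to $c_{k-1}(2(d-1+(k-1)))$ directly — the shift in the exponent makes this awkward. Instead, introduce the auxiliary sequence $b_k(n) := c_k(n)$ viewed as a polynomial in $n$: by the recursion and downward induction on $k$, one checks that $c_k(n)$ is a polynomial in $n$ of degree $2k$ (degree $0$ gives the constant $1$; each application of the recursion in the form $c_k(n) - c_k(n-1) = c_{k-1}(n-2)$ is a finite difference that raises degree by $2$). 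Then I would produce an explicit polynomial formula $P_k(n)$ for $c_k(n)$ — guided by the Catalan/ballot-number structure in Table~\ref{tab:catalan} — and verify it satisfies the finite-difference recursion $P_k(n) - P_k(n-1) = P_{k-1}(n-2)$ with the correct normalization at, say, $n = k$ (where $G^k$'s lower coefficients stabilize). Once $c_k(n) = P_k(n)$ is known as an identity of polynomials, substituting $n = 2(d+k)$ and simplifying the binomials yields $\binom{k+2d-1}{2d-1}\frac{d+k}{d}$.

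Alternatively, and perhaps more in the spirit of ``using the recursion and induction'' as the authors say, I would do a single two-variable induction: set $f(d,k) := c_k(2(d+k))$ and show $f(d,k) = f(d,k-1) + (\text{correction})$ using $c_k(2(d+k)) = c_k(2(d+k)-1) + c_{k-1}(2(d+k)-2)$ and then $c_k(2(d+k)-1) = c_k(2(d+k)-2) + c_{k-1}(2(d+k)-3)$, iterating the first recursion downward in the exponent by two steps at a time until the exponent drops to $2(d+(k-1))$; this expresses $f(d,k)$ in terms of $f(d,k-1)$ plus a telescoping sum of $c_{k-1}$-values, each of which is known by the induction on $k$. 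Summing the telescoped $c_{k-1}$ contributions against the binomial identity $\sum_{j}\binom{j+2d-2}{2d-2}\frac{d-1+j}{d-1}\big|_{\text{appropriate range}}$ should collapse — via the hockey-stick identity $\sum_{j=0}^{k}\binom{j+m}{m} = \binom{k+m+1}{m+1}$ — to exactly $\binom{k+2d-1}{2d-1}\frac{d+k}{d}$.

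The last coefficient statement is separate and easy: $c_k(2k)$ is the $X^k$ coefficient of $G^{2k}$. Observe from Table~\ref{tab:catalan} (the $G^2, G^4, G^6, \dots$ rows) that $c_k(2k) = 2$; to prove it, note $G^2 = 1 + 2X \cdot(\text{series with unit constant term})$ more precisely $G^2 = G + X = 1 + X + XC(X) + X$, and then argue that in $G^{2k} = (G^2)^k$ the top coefficient we care about picks up exactly the ``extra'' linear term from each of the $k$ factors of $G^2 = X + (\text{lower-order-behaving part})$... I would instead prove it cleanly by the recursion: $c_k(2k) = c_k(2k-1) + c_{k-1}(2k-2)$, and separately $c_k(2k-1) = c_k(2k-2) + c_{k-1}(2k-3)$; since $G^{m}$ and $G^{m+1}$ agree in all coefficients up to $X^{\lfloor m/2\rfloor}$ (a consequence of $G^{m+1} - G^m = XG^{m-1}$ shifting by one degree), one gets $c_k(2k) = c_k(2k-1)$, so it suffices to evaluate $c_k(2k-1)$, which by the first part's formula with $d=\tfrac12$... — cleaner: just use $c_k(2k) - c_k(2k-2) = c_{k-1}(2k-3) + c_{k-1}(2k-2)$ and induct, the right side being two consecutive ballot numbers summing to the needed constant.

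\medskip
\noindent\textbf{Main obstacle.} The genuine difficulty is bookkeeping: turning the two-step unfolding of the recursion in the exponent variable into a clean telescoping sum, and then recognizing the resulting binomial sum as a hockey-stick identity with no off-by-one errors. The recursion $G^n = G^{n-1} + XG^{n-2}$ is simple, but the target exponent $2(d+k)$ depends on $k$, so every induction step on $k$ forces a walk of $2$ steps in $n$ while simultaneously changing $d$'s role; keeping the binomial coefficient $\binom{k+2d-1}{2d-1}\frac{d+k}{d}$ aligned through that walk — rather than the slicker polynomial-identity argument — is where the real care is needed. I expect the polynomial-in-$n$ reformulation (proving $c_k(n) = P_k(n)$ as polynomials, then specializing) to be the path of least resistance and would present that.
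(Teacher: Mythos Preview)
The paper's own proof is essentially absent: it says only ``Using the recursion and induction, one easily proves the following lemma.'' So your approach---induct using $G^n=G^{n-1}+XG^{n-2}$, i.e.\ $c_k(n)=c_k(n-1)+c_{k-1}(n-2)$---is exactly what the paper has in mind. That said, your execution contains two genuine errors and is more tangled than necessary.

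First, the degree claim is wrong. From $c_k(n)-c_k(n-1)=c_{k-1}(n-2)$, taking a finite difference raises the polynomial degree by \emph{one}, not two; since $c_0(n)\equiv 1$, one gets that $c_k(n)$ is a polynomial in $n$ of degree $k$, not $2k$. (Check: $c_1(n)=n$, $c_2(n)=\tfrac{n(n-3)}{2}$.) This does not kill the ``polynomial in $n$'' strategy, but the argument as written is incorrect.

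Second, your proposed proof of $c_k(2k)=2$ relies on the assertion that ``$G^m$ and $G^{m+1}$ agree in all coefficients up to $X^{\lfloor m/2\rfloor}$.'' This is false: $G^{m+1}-G^m=XG^{m-1}$ has $X^j$-coefficient $c_{j-1}(m-1)$, which is generically nonzero (e.g.\ $c_1(2)=2\neq 3=c_1(3)$). So that branch of the argument does not go through.

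A much cleaner route, still in the spirit of ``recursion and induction,'' is to prove the single uniform identity
\[
c_k(n)=\binom{n-k}{k}+\binom{n-k-1}{k-1}
\]
for all $n\ge 0$, $k\ge 0$ (with the convention $\binom{m}{-1}=0$). The recursion check is one line of Pascal's rule:
\[
\Bigl[\tbinom{n-1-k}{k}+\tbinom{n-1-k}{k-1}\Bigr]+\Bigl[\tbinom{n-2-k}{k-1}+\tbinom{n-2-k}{k-2}\Bigr]
=\tbinom{n-k}{k}+\tbinom{n-k-1}{k-1},
\]
and the initial conditions $c_0(n)=1$, $c_k(0)=\delta_{k,0}$, $c_k(1)=[X^k]G$ are immediate. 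Specializing $n=2(d+k)$ and simplifying gives $\binom{k+2d-1}{2d-1}\frac{d+k}{d}$; specializing $n=2k$ gives $\binom{k}{k}+\binom{k-1}{k-1}=2$. This replaces the telescoping/hockey-stick bookkeeping you flagged as the main obstacle with a single binomial identity, and handles both assertions of the lemma at once.
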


These are precisely the coefficients we need.  Therefore, we can conclude:
\begin{itemize}
\item From equation (\ref{OGW4}), if $(d_1,...,d_n)\neq\emptyset$, then 
\begin{align}
\sum_{|\bar{k}|=k}&\sum_{\Gamma\in\Gamma(\bar{d};\bar{k})}\overline{OGW}(\Gamma)=\frac{-2^{n-1}}{d\cdot|\text{Aut}(\overline{d})|}\prod_{i=1}^n\left(y_{d_i}^{(t)}+y_{d_i}^{(b)}(qe^x)^{d_i}\right)\nonumber \\
& \cdot\prod_{i=1}^n(-1)^{d_i}{2d_i-1 \choose d_i}\sum_{k\geq 0} (d+k)^{n-2}  {k+(2d-1) \choose 2d-1} (qe^x)^k. 
\end{align}
\item Also from equations  (\ref{OGW4}), if $(d_1,...,d_n)=\emptyset$ and $(k_1,...,k_l)\neq\emptyset$, then
\begin{equation}
\sum_{|\bar{k}|=k} \sum_{\Gamma\in\Gamma(\emptyset;\bar{k})}\overline{OGW}(\Gamma)=\frac{-1}{k^3}(qe^x)^k.
\end{equation}
Here we have recovered the Aspinwall-Morrison formula for $\mathcal{K}_{\mathbb{P}^1}\oplus\mathcal{O}_{\mathbb{P}^1}$.
\end{itemize}
Finally recall that:
\begin{itemize}
\item If both $\bar{d}=\emptyset$ and $\bar{k}=\emptyset$, then the locus consists of the degree $0$ maps with only divisor insertions which can be computed via localization to be
\begin{equation}
\frac{-x^3}{12}.
\end{equation}
\item The contribution from a locus $\Gamma'$ consisting of a single disk mapping to the bottom vertex with winding $d$ is given by
\begin{equation}
\frac{1}{d^2}y_d^{(b)}.
\end{equation}
\end{itemize}

Adding all contributions we conclude that
\begin{align}
\label{almostthere}
&OGW_{\mathcal{K}_{\mathbb{P}^1}\oplus\mathcal{O}_{\mathbb{P}^1}}(x,q,y_i^{(t)},y_i^{(b)})=\frac{-1}{2}\frac{x^3}{3!}+\sum_{k\geq 1}\frac{-1}{k^3}(qe^x)^k + \sum_{d\geq 1} \frac{1}{d^2}y_d^{(b)}\nonumber  \\
&+\sum_{(d_1,...,d_n)\neq\emptyset} \Bigg[ \frac{-2^{n-1}}{d\cdot|\text{Aut}(\overline{d})|}\prod_{i=1}^n\left(y_{d_i}^{(t)}+y_{d_i}^{(b)}(qe^x)^{d_i}\right)\prod_{i=1}^n(-1)^{d_i}{2d_i-1 \choose d_i}\nonumber \\ 
& \hspace{1cm}\cdot \sum_{k\geq 0} (d+k)^{n-2}  {k+(2d-1) \choose 2d-1} (qe^x)^k \Bigg]
\end{align}

In a neighborhood of $x=-\infty$ we have:
\begin{equation}
\label{resum}
\sum_{k\geq 0} {k+(2d-1) \choose 2d-1} (qe^x)^{d+k}=\frac{(qe^x)^d}{(1-qe^x)^{2d}}.
\end{equation}
Using (\ref{resum}) we can express (\ref{almostthere}):
\begin{equation}
\sum_{k\geq 0} (d+k)^{n-2} {k+(2d-1) \choose 2d-1} (qe^x)^k=\frac{1}{(qe^x)^d}\frac{d^{n-2}}{dx^{n-2}}\left(\frac{(qe^x)^d}{(1-qe^x)^{2d}}\right)
\end{equation}
where differentiation/integration is computed formally termwise.  When $n\geq 2$, there is no ambiguity as $\frac{d^{n-2}}{dx^{n-2}}$ is a derivative.  When $n=1$, we must practice a little bit of caution as the integral is only defined up to translation.  
Notice that
\begin{equation}
\lim_{x\rightarrow -\infty}\sum_{k\geq 0}\frac{1}{k+d}{k+(2d-1) \choose 2d-1}(qe^{x})^{k+d}=0,
\end{equation}

hence by 
$$
\int\frac{(qe^{x})^d}{(1-qe^{x})^{2d}}dx
$$
we denote the antiderivative having limit $0$  as $x$ approaches $-\infty$.


We conclude this section by putting the open potential in its simplest form:

\begin{proposition}\label{OGWres}
The open Gromov-Witten potential (sans fundamental class insertions) for $\mathcal{K}_{\mathbb{P}^1}\oplus\mathcal{O}_{\mathbb{P}^1}$ is
\begin{align*}
&OGW_{\mathcal{K}_{\mathbb{P}^1}\oplus\mathcal{O}_{\mathbb{P}^1}}(x,q,y_i^{(t)},y_i^{(b)})\hspace{.3cm}=\hspace{.3cm}\frac{-1}{12}x^3+\sum_{k\geq 1}\frac{-1}{k^3}(qe^x)^k\\
&+\hspace{.5cm}\sum_{d\geq 1} \Bigg[\frac{1}{d^2}y_d^{(b)} +\frac{(-1)^{d+1}}{d} \left(y_{d}^{(t)}+y_{d}^{(b)}(qe^x)^{d}\right){2d-1 \choose d}\\
&\hspace{3cm}\cdot\frac{1}{(qe^x)^d}\int \frac{(qe^{x})^d}{(1-qe^{x})^{2d}}dx \Bigg] \\
&+\sum_{d_1,...,d_n (n\geq 2)} \Bigg[ \frac{-2^{n-1}}{d\cdot|\text{Aut}(\overline{d})|}\left[\prod_{i=1}^n(-1)^{d_i}\left(y_{d_i}^{(t)}+y_{d_i}^{(b)}(qe^x)^{d_i}\right){2d_i-1 \choose d_i}\right]\\ 
& \hspace{3cm}\cdot \frac{1}{(qe^x)^d}\frac{d^{n-2}}{dx^{n-2}}\left(\frac{(qe^x)^d}{(1-qe^x)^{2d}}\right) \Bigg].
\end{align*}
\end{proposition}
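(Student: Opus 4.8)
The plan is to assemble Proposition \ref{OGWres} by collecting and resumming the contributions already computed in the section, so the proof is essentially a bookkeeping argument that organizes equation (\ref{almostthere}) into closed form. First I would recall equation (\ref{almostthere}), which already expresses $OGW_{\mathcal{K}_{\mathbb{P}^1}\oplus\mathcal{O}_{\mathbb{P}^1}}$ as a sum of four groups of terms: the degree-zero term $-x^3/12$, the Aspinwall-Morrison series $\sum_{k\geq 1}\frac{-1}{k^3}(qe^x)^k$ coming from $\bar d=\emptyset$, the single-disk-at-the-bottom terms $\sum_{d\geq 1}\frac{1}{d^2}y_d^{(b)}$ arising from the loci $\Gamma'$, and the main sum over nonempty winding profiles $(d_1,\dots,d_n)$ with its inner sum over $k\geq 0$. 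The entire content of the proposition is to rewrite that inner sum $\sum_{k\geq 0}(d+k)^{n-2}{k+(2d-1)\choose 2d-1}(qe^x)^k$ in the two regimes $n\geq 2$ and $n=1$ using the resummation identity (\ref{resum}) and the differentiation formula that follows it.

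For the case $n\geq 2$ I would invoke (\ref{resum}) directly: since $\sum_{k\geq 0}{k+(2d-1)\choose 2d-1}(qe^x)^{d+k}=\frac{(qe^x)^d}{(1-qe^x)^{2d}}$ near $x=-\infty$, applying $\frac{d^{n-2}}{dx^{n-2}}$ pulls down exactly the factor $(d+k)^{n-2}$ inside the series, so that $\sum_{k\geq 0}(d+k)^{n-2}{k+(2d-1)\choose 2d-1}(qe^x)^k=\frac{1}{(qe^x)^d}\frac{d^{n-2}}{dx^{n-2}}\left(\frac{(qe^x)^d}{(1-qe^x)^{2d}}\right)$, which is a genuine derivative with no ambiguity. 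Substituting this into the $n\geq 2$ part of (\ref{almostthere}) yields the last displayed line of the proposition verbatim. For $n=1$ the "derivative" $\frac{d^{-1}}{dx^{-1}}$ is an antiderivative, defined only up to an additive constant; here I would use the stated observation that $\lim_{x\to-\infty}\sum_{k\geq 0}\frac{1}{k+d}{k+(2d-1)\choose 2d-1}(qe^x)^{k+d}=0$ to pin down the antiderivative $\int\frac{(qe^x)^d}{(1-qe^x)^{2d}}dx$ as the one vanishing at $-\infty$. This $n=1$ contribution must then be combined with the $\sum_{d\geq 1}\frac{1}{d^2}y_d^{(b)}$ terms from $\Gamma'$ and with the piece of the main sum where the single disk sits at the bottom, producing the middle block of the proposition containing both $y_d^{(b)}/d^2$ and the $\left(y_d^{(t)}+y_d^{(b)}(qe^x)^d\right){2d-1\choose d}$ factor times the antiderivative.

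The main obstacle I anticipate is not any single computation but rather the careful tracking of which terms of (\ref{almostthere}) migrate into which block of the final formula — in particular handling the $n=1$ case consistently, where the winding profile $\bar d=(d)$ overlaps conceptually with both the $\Gamma'$ loci and the general $\bar d\neq\emptyset$ sum, and making sure the $\frac{1}{(qe^x)^d}$ prefactors and the automorphism factors $|\text{Aut}(\bar d)|$ are not double-counted. I would therefore organize the proof as: (i) quote (\ref{almostthere}); (ii) treat $n\geq 2$ via the derivative identity; (iii) treat $n=1$ via the antiderivative normalization, folding in the $\Gamma'$ terms; (iv) observe the remaining terms ($-x^3/12$ and the Aspinwall-Morrison series) are already in final form; (v) conclude that the sum of all blocks is exactly the claimed expression. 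Since every ingredient — the localization contributions, the closed form for $F$, the Lemma on $X^k$-coefficients of $G^{2(d+k)}$, and the resummation (\ref{resum}) — has already been established, the proof reduces to this reorganization and can be presented compactly.
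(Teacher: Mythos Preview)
Your proposal is correct and follows essentially the same approach as the paper: Proposition \ref{OGWres} is presented as the culmination of Section \ref{sec:openup}, obtained by reorganizing equation (\ref{almostthere}) via the resummation identity (\ref{resum}), treating $n\geq 2$ as a genuine derivative and $n=1$ as the antiderivative normalized to vanish at $x\to -\infty$, then folding in the $\Gamma'$ and degree-zero pieces. There is no additional idea beyond this bookkeeping, and your outline matches the paper's organization step for step.
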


The first line is the closed contribution, the next two lines are the contribution from curves with one boundary component, and the final two lines are the contribution from curves with more than one boundary component.


\section{Open Orbifold Gromov-Witten Invariants of $[\C^3/\Z_2]$}\label{sec:opendown}
\label{sec:oogwi}

In this section we compute the open orbifold GW invariants of $[\C^3/\Z_2]$ following \cite{bc:ooinv}.  We define a $\C^*$ action on the orbifold with weights described in Figure \ref{fig:orbvert}:


\begin{figure}
	\centering
		\includegraphics[height=1.5cm]{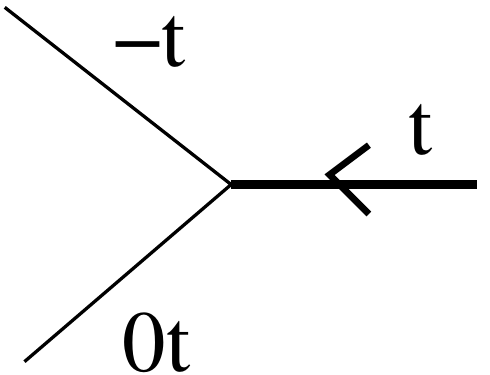}
	\caption{Toric diagram for $[\C^3/\Z_2]$ and $\C^\ast$ weights.}
	\label{fig:orbvert}
\end{figure}

We characterize the $\C^*$ fixed maps:
\begin{itemize}
\item The source curve consists of a genus $0$  closed curve along with attached disks.  The closed component can carry (possibly twisted) marks whereas a disk can only carry a mark at the origin (if it is not attached to a closed component).  The attaching points of the nodes must carry inverse twisting.
\item The closed curve must contract to the vertex.
\item The disks must map to the twisted $\C$ with prescribed windings at the Lagrangian.
\end{itemize}

Since we are working with a $\Z_2$ quotient, we  simply refer to points as twisted or untwisted as there is no ambiguity.  A careful analysis of the obstruction theory via the normalization sequence of the source curve shows that the $0$ weight conveniently kills all contributions where a disk attaches to a contracted component at an untwisted node.  By dimensional reasons, all other marks must be twisted.

Combinatorially, the fixed loci $\Lambda$ are indexed by
\begin{itemize}
\item $m$ the number of insertions of the twisted sector and
\item $d_1,...,d_n$ the winding profile of the disks.
\end{itemize}

\begin{remark}\label{even}
Since all nodes and marked points are twisted, the maps restricted to the contracted component (maps into $\mathcal{B}\Z_2$) classify double covers of the contracted component with simple ramification over $m+n$ points.  Since such a cover only exists if $m+n$ is even, the loci are non-empty only when $m+n$ is even.
\end{remark}

If we let $z$ and $w_d$ be formal variables tracking the twisted sector insertions and the winding $d$ disks, then the open orbifold potential can be computed as

\begin{equation}
OGW_{[\C^3/\Z_2]}(z,w_i)=\sum_{\Lambda}OGW(\Lambda)\frac{z^m}{m!}w_{d_1}\cdot...\cdot w_{d_n}
\end{equation}

We now group the computation of $OGW(\Lambda)$ into three components:
\begin{itemize}
\item \textbf{Closed Curve:} The closed curve contracted to the vertex essentially carries the information of a map into $\mathcal{B}\Z_2$ along with the weights of the $\C^*$ action on the three normal directions.  This classifies a double cover of the source curve.  Analogous to \cite[section 2.1]{cc:gl}, the contribution from the closed component is the equivariant euler class of two copies of the dual of the Hodge bundle on the cover twisted by the weights of the action on the untwisted fixed fibers:
\begin{equation}
e(\mathbb{E}_{-1}^\vee(-1)\oplus\mathbb{E}_{-1}^\vee(0))
\end{equation}
\item \textbf{Disks:} The disk contribution is laid out in \cite[section 2.2.3]{bc:ooinv}.  This contribution is a combinatorial function depending on the winding at the Lagrangian and the twisting at the origin of the disk.  The localization step simplifies the disk contribution to two cases, either the origin of the disk is marked and twisted (possibly a node) or the origin is unmarked.  For the particular case at hand, a disk with winding $d$ and with twisting at the origin contributes
\begin{equation}
\frac{1}{2d}\frac{(2d-1)!!}{(2d)!!}
\end{equation}
whereas a disk with no mark and no twisting at the origin contributes
\begin{equation}
\frac{1}{2d^2}.
\end{equation}

\item \textbf{Nodes:} We consider the nodes attaching a winding $d$ disk to the closed component.  Each one  gets a $t$ from the weight of the action on the twisted sector.  Smoothing the node contributes $\frac{1}{\frac{t}{2d}-\frac{\psi_i}{2}}$.  
\end{itemize}

Putting together the three parts described above, we find that $OGW(\Lambda)$ is:
\begin{align*}
&\frac{1}{|\text{Aut}(\overline{d})|}\Bigg[\prod_{i=1}^n\frac{1}{2d}\frac{(2d_i-1)!!}{(2d_i)!!}\Bigg]\int(2t)^n\frac{e^{eq}(\mathbb{E}_{-1}^\vee(-1)\oplus\mathbb{E}_{-1}^\vee(0))}{\prod_{i=1}^n(\frac{t}{d_i}-\psi_i)}\\
&=\frac{1}{|\text{Aut}(\overline{d})|}\Bigg[\prod_{i=1}^n\frac{(2d_i-1)!!}{(2d_i)!!}\Bigg]\sum_{i=1}^{g-1}\sum_{|\overline{j}|=i-1}\int\lambda_g\lambda_{g-i}(\overline d\overline\psi)^{\overline{j}}
\end{align*}
where the integral is taken over $\overline{\mathcal{M}}_{0;m+n,0}(\mathcal{B}\Z_2)$, $g=\frac{m+n-2}{2}$ (the genus of the cover of the closed curve) and $(\overline d\overline\psi)^{\overline{j}}$ and $|\overline j|$ are defined in section \ref{sec:hodge}.

Summing over all $m$ (equivalently $g$) and specializing $q_i=d_i$ in Thereom \ref{gen}, we see that the contribution to the open potential from all maps with a fixed winding profile $d_1,...,d_n$ is given by
\begin{equation}
\frac{1}{|\text{Aut}(\overline{d})|}\Bigg[\prod_{i=1}^n\frac{(2d_i-1)!!}{(2d_i)!!}\Bigg]\frac{d^{n-2}}{dz^{n-2}}\frac{\sec^{2d}(z/2)}{2d}
\end{equation}
There is no ambiguity for $n\geq 2$, but we must again be careful when $n<2$.  

When $n=1$ the above formula still holds, but since integrals are only defined up to translation, we must make sure and get the correct constant term.  The constant term corresponds to the contribution from maps with one boundary component and no marked points.  The only type of map in the fixed locus that satisfies this criteria is a disk with no marked points mapping with winding $d$.  We've seen that the contribution from such a map is $\frac{1}{2d^2}$. 

When $n=0$, we must compute the closed contribution.  The maps must have at least 3 marked points to be stable, but any map into $\mathcal{B}\Z_2$ must have an even number of twisted points (see Remark \eqref{even}).  Since there are no disk or node smoothing factors, the contribution is 
\begin{equation}
H(z)=\sum_{g\geq 1}\int_{\overline{\mathcal{M}}_{0;2g+2,0}(\mathcal{B}\Z_2)}\lambda_g\lambda_{g-1}\frac{z^{2g+2}}{(2g+2)!}
\end{equation}
and the now classical $\lambda_g\lambda_{g-1}$ result of Faber and Pandharipande \cite{fp:lsahiittr} implies that $\frac{d^2}{dz^2}H(z)=\log(\sec(z/2)).$

Pulling together everything from the above discussion, we prove the following result:

\begin{proposition}
\label{prop:orboppot}
The open orbifold Gromov-Witten potential (sans fundamental class insertions) of $[\C^3/\Z_2]$ is
\begin{align}
&OGW_{[\C^3/\Z_2]}(z,w_i)=\hspace{.5cm}H(z)\nonumber\\
& + \hspace{.5cm} \sum_{d\geq 1}\left(\frac{1}{2d^2}+\frac{(2d-1)!!}{(2d)!!}\int\frac{\sec^{2d}(z/2)}{2d}dz\right)w_d \nonumber \\
& + \hspace{-.2cm} \sum_{d_1,...,d_n(n\geq 2)} \frac{1}{|\text{Aut}(\overline{d})|}\Bigg(\prod_{i=1}^n\frac{(2d_i-1)!!}{(2d_i)!!}\Bigg)\left(\frac{d^{n-2}}{dz^{n-2}}\frac{\sec^{2d}(z/2)}{2d}\right)w_{d_1}\cdot...\cdot w_{d_n},
\end{align}
where the antiderivative is chosen to vanish at $z=0$.
\end{proposition}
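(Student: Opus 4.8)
The plan is to compute the potential by $\C^*$-localization on the moduli space of open orbifold stable maps, following the orbifold extension of the Katz--Liu setup developed in \cite{bc:ooinv}. First I would fix the torus action with the Calabi--Yau weights of Figure \ref{fig:orbvert} and lift it to the moduli space, so that the fixed loci decompose into a contracted genus-zero twisted component (a map to $\mathcal{B}\Z_2$) together with a collection of disks mapping to the twisted $\C$ with prescribed windings $d_1,\dots,d_n$. A normalization-sequence analysis of the obstruction theory shows, crucially, that the vanishing weight on one normal direction kills every configuration in which a disk attaches to the contracted component at an untwisted node; combined with a dimension count this forces all nodes and all marked points to be twisted, so the fixed loci are indexed only by the number $m$ of twisted-sector insertions and the winding profile $\overline d=(d_1,\dots,d_n)$, and are nonempty only when $m+n$ is even (Remark \ref{even}).

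Next I would compute $OGW(\Lambda)$ as a product of three pieces. The contracted component contributes the equivariant Euler class $e(\mathbb{E}_{-1}^\vee(-1)\oplus\mathbb{E}_{-1}^\vee(0))$ of two twisted copies of the dual Hodge bundle on the associated double cover, exactly as in the closed computation of \cite{cc:gl}; each winding-$d$ disk contributes the combinatorial disk function of \cite[Section 2.2.3]{bc:ooinv}, which here evaluates to $\frac{1}{2d}\frac{(2d-1)!!}{(2d)!!}$ when the origin is a twisted point (or a node) and to $\frac{1}{2d^2}$ when the origin is unmarked; and each node smoothing contributes $\left(\frac{t}{2d}-\frac{\psi}{2}\right)^{-1}$. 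Assembling these factors and cancelling the equivariant parameter $t$ reduces $OGW(\Lambda)$ to a sum of hyperelliptic Hodge integrals of the form $L(g,i,\overline j)$ over $\overline{\mathcal{M}}_{0;m+n,0}(\mathcal{B}\Z_2)$ with $g=\frac{m+n-2}{2}$, weighted by $\prod_i\frac{(2d_i-1)!!}{(2d_i)!!}$ and by powers of the windings coming from the node-smoothing denominators.

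Then I would sum over $m$ (equivalently over $g$) with the formal variables $z$, $w_i$ attached, and invoke Corollary \ref{gen} after specializing the descendant variables to $q_i=d_i$: this collapses the generating series of hyperelliptic Hodge integrals with fixed winding profile into $\frac{d^{n-2}}{dz^{n-2}}\frac{\sec^{2d}(z/2)}{2d}$ for $n\geq 2$. Finally I would treat the two remaining regimes by hand. For $n=1$ the same formula holds, but since the integrals are defined only up to translation I must pin down the constant of integration; it is supplied by the unique unmarked winding-$d$ disk, whose contribution $\frac{1}{2d^2}$ forces the antiderivative to vanish at $z=0$. For $n=0$ the only contribution is the purely closed series $H(z)=\sum_{g\geq 1}\int\lambda_g\lambda_{g-1}\frac{z^{2g+2}}{(2g+2)!}$, whose second derivative equals $\log\sec(z/2)$ by the Faber--Pandharipande $\lambda_g\lambda_{g-1}$ evaluation \cite{fp:lsahiittr}. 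Collecting the three regimes yields the stated formula. The main obstacle I expect is the bookkeeping in the unstable and low-$n$ cases — extracting the correct integration constants and applying the orbifold disk functions of \cite{bc:ooinv} with the right twisting data at the origin — rather than any conceptual difficulty, since the hard analytic input (Corollary \ref{gen}, hence Theorem \ref{thm:tphi}) is already established.
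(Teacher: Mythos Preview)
Your proposal is correct and follows essentially the same route as the paper's argument in Section~\ref{sec:oogwi}: localize with the weights of Figure~\ref{fig:orbvert}, use the zero weight plus a dimension count to force all nodes and marks to be twisted, decompose each fixed-locus contribution into the contracted-component, disk, and node-smoothing factors you list, reduce to the hyperelliptic Hodge integrals $L(g,i,\overline j)$, and then sum over $m$ via Corollary~\ref{gen} specialized to $q_i=d_i$. The treatment of the boundary cases $n=1$ (constant of integration fixed by the unmarked disk contribution $\frac{1}{2d^2}$) and $n=0$ (the closed piece $H(z)$ via the Faber--Pandharipande $\lambda_g\lambda_{g-1}$ formula) is also exactly what the paper does.
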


\section{An Example of the Open Crepant Resolution Conjecture}\label{sec:openres}
\label{sec:five}

Now that we have computed the open potentials for $[\C^3/\Z_2]$ and its crepant resolution $\mathcal{K}_{\mathbb{P}^1}\oplus\mathcal{O}_{\mathbb{P}^1}$, we show that there is a change of variables which equates the stable terms of the two potentials.  We start with the contribution from a given winding profile on the orbifold, we consider all contributions on the resolution with that same winding profile, and we show that the change of variables equates these contributions.  More specifically, we show the following.

\begin{theorem}
\label{thm:ocrc}
Under the change of variables
\begin{align}
q&\rightarrow -1 \nonumber\\
x&\rightarrow iz\nonumber\\
y_d^{(b)}&\rightarrow \frac{i}{2}w_d\nonumber\\
y_d^{(t)}&\rightarrow \frac{i}{2}w_d(-e^{iz})^d
\end{align}
the open GW potential of $\mathcal{K}_{\mathbb{P}^1}\oplus\mathcal{O}_{\mathbb{P}^1}$ analytically continues to the open GW potential of $[\C^3/\Z_2]$ up to unstable terms.

\end{theorem}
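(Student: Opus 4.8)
The plan is to prove Theorem~\ref{thm:ocrc} by a direct, term-by-term comparison of the two closed-form potentials established in Propositions~\ref{OGWres} and \ref{prop:orboppot}, organized according to the number $n$ of boundary components of the contributing curves. On the resolution side the basic analytic building block is the rational function $(qe^x)^d/(1-qe^x)^{2d}$, together with its formal $x$-derivatives and $x$-antiderivative; on the orbifold side it is $\sec^{2d}(z/2)$, together with its $z$-derivatives and $z$-antiderivative. The heart of the argument is to check that the prescribed substitution carries one family into the other, with the combinatorial prefactors falling into line.

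Three computations do the work. First, the elementary identity $1+e^{iz}=2e^{iz/2}\cos(z/2)$ gives, after $q\to-1$ and $x\to iz$,
\[
\frac{(qe^x)^d}{(1-qe^x)^{2d}}\ \longrightarrow\ \frac{(-e^{iz})^d}{(1+e^{iz})^{2d}}=\frac{(-1)^d}{4^d}\,\sec^{2d}(z/2),
\]
while $(qe^x)^{-d}\to(-1)^d e^{-idz}$ absorbs the surviving exponential, so the ``geometric'' factors agree up to the explicit constant $4^{-d}$ and a sign. Second, the binomial/double-factorial identity $\tfrac{(2d-1)!!}{(2d)!!}=\tfrac{2}{4^d}\binom{2d-1}{d}$ converts the numerical coefficients on the resolution into those on the orbifold. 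Third, under $x\to iz$ one has $\tfrac{d}{dx}=-i\,\tfrac{d}{dz}$, and, crucially, the two disk variables merge: $y_d^{(t)}+y_d^{(b)}(qe^x)^d\to i\,w_d(-e^{iz})^d$. With these in hand the terms with $n\ge 2$ match \emph{exactly}: the powers of $i$ collapse via $i^{n}(-i)^{n-2}=-1$, the signs $(-1)^{d_i}$ cancel against $(-e^{iz})^{d_i}$, and the numerical part reduces to the identity $2^{n-1}4^{-d}\prod_i\binom{2d_i-1}{d_i}=\tfrac12\prod_i\tfrac{(2d_i-1)!!}{(2d_i)!!}$, where $d=\sum_i d_i$.

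For $n=1$ the same computation matches the $z$-dependent part of the potential exactly, but the antiderivative chosen on the resolution (vanishing as $x\to-\infty$, i.e. as $z\to i\infty$) differs from the one on the orbifold (vanishing at $z=0$) by a constant, and the two purely local disk terms $\tfrac{1}{d^2}y_d^{(b)}$ and $\tfrac{1}{2d^2}w_d$ differ by a factor of $i$; both discrepancies are exactly the contributions the theorem excludes, namely those of the disk carrying a single special point. For $n=0$ one must analytically continue $-\mathrm{Li}_3(-e^{iz})+\tfrac{i}{12}z^3$; differentiating twice yields $\tfrac{iz}{2}-\log(1+e^{iz})=\log\sec(z/2)-\log 2$, which matches $\tfrac{d^2}{dz^2}H(z)=\log\sec(z/2)$ up to a constant, and the polylogarithm inversion formula (equivalently a Bernoulli polynomial) identifies the remaining discrepancy as a polynomial of degree $\le 2$ in $z$; all of these lie in unstable degrees. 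I expect the $n=0$ step --- the analytic continuation of the degree-zero part and the verification that the entire discrepancy lands in unstable degrees --- to be the main obstacle, along with the delicate bookkeeping of signs and powers of $i$ that must conspire to cancel uniformly across all $n$.
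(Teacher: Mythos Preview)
Your proposal is correct and follows essentially the same route as the paper's own proof: both organize the comparison by the number $n$ of boundary components, use the identity $1+e^{iz}=2e^{iz/2}\cos(z/2)$ to convert $(qe^x)^d/(1-qe^x)^{2d}$ into $\sec^{2d}(z/2)/4^d$, track the chain rule $d/dx=(-i)\,d/dz$, observe that the disk variables merge as $y_d^{(t)}+y_d^{(b)}(qe^x)^d\to i\,w_d(-e^{iz})^d$, and reduce the numerics via $\tfrac{(2d_i-1)!!}{(2d_i)!!}=\tfrac{2}{4^{d_i}}\binom{2d_i-1}{d_i}$. The only notable difference is the $n=0$ step: the paper dispatches it in one line by citing the Bryan--Graber computation, whereas you sketch the analytic continuation of $-\mathrm{Li}_3$ directly; your concern that this would be ``the main obstacle'' is therefore unfounded, since in the paper's context it is already known.
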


\begin{proof}On the closed portion of the potential, we essentially (up to a harmless weight factor) have the result of \cite[Section 3.2]{bg:crc}.


Now consider the winding $d$ disk contribution on the resolution:  
\begin{align}
&\frac{1}{d^2}y_d^{(b)} +\frac{(-1)^{d+1}}{d} \left(y_d^{(t)}+y_d^{(b)}(qe^x)^d)\right){2d-1 \choose d} \nonumber\\
&\hspace{3cm}\cdot\frac{1}{(qe^x)^d}\Bigg[\int \frac{(qe^{x})^d}{(1-qe^{x})^{2d}}dx \Bigg].
\end{align}
Making the change of variables, it becomes
\begin{align*}
&\frac{i}{2d^2}w_d +\frac{(-1)^{d+1}}{d} \left(\frac{i}{2}w_d(-e^{iz})^d+\frac{i}{2}w_d(-e^{iz})^d)\right){2d-1 \choose d}\\
&\hspace{2cm}\cdot\frac{1}{(-e^{iz})^d}\Bigg[i\int \frac{(-e^{iz})^d}{(1+e^{iz})^{2d}}dz \Bigg]\\
&=\frac{i}{2d^2}w_d +\frac{i(-1)^{d+1}}{d} w_d{2d-1 \choose d}
\Bigg[i\int \frac{(-e^{iz})^d}{(1+e^{iz})^{2d}}dz \Bigg]\\
&=\frac{i}{2d^2}w_d +\frac{-i}{d} w_d{2d-1 \choose d}
\cdot\Bigg[i\int \frac{\sec^{2d}(z/2)}{(2^{2d})}dz \Bigg]\\
\end{align*}


Here we do not pay attention to the constant terms in the anti-derivatives since they correspond to unstable terms about which we make no claims. Hence we obtain: 

\begin{equation}
\frac{1}{d}{2d-1 \choose d}w_d\int \frac{\sec^{2d}(z/2)}{(2^{2d})}dz=\frac{(2d-1)!!}{(2d)!!}w_d\int\frac{\sec^{2d}(z/2)}{2d}dz, 
\end{equation}
the disk potential computed on the orbifold.

Finally, consider a general term in the open potential of the resolution with winding profile $d_1,...,d_n$:
\begin{equation}
 \frac{-2^{n-1}}{d\cdot|\text{Aut}(\overline{d})|}\left[\prod_{i=1}^n(-1)^{d_i}\left(y_{d_i}^{(t)}+y_{d_i}^{(b)}(qe^x)^{d_i}\right){2d_i-1 \choose d_i}\right]
 \frac{1}{(qe^x)^d}\frac{d^{n-2}}{dx^{n-2}}\left(\frac{(qe^x)^d}{(1-qe^x)^{2d}}\right)
\end{equation}
Making the change of variables, this becomes
\begin{align*}
&\frac{-2^{n-1}}{d\cdot |\text{Aut}(\overline{d})|}{(i)}^n\prod_{i=1}^nw_{d_i}{2d_i-1 \choose d_i}\frac{1}{i^{n-2}}\frac{d^{n-2}}{dz^{n-2}}\frac{1}{2^{2d}}\sec^{2d}\left(\frac{z}{2}\right)\\
&=\frac{1}{2d\cdot |\text{Aut}(\overline{d})|}\prod_{i=1}^n\frac{w_{d_i}}{2^{2d_i-1}}{2d_i-1 \choose d_i}\left(\frac{d^{n-2}} {dz^{n-2}}\sec^{2d}\left(\frac{z}{2}\right)\right)\\
&=\frac{1}{|\text{Aut}(\overline{d})|}\prod_{i=1}^n\frac{(2d-1)!!}{(2d)!!}\left(\frac{d^{n-2}} {dz^{n-2}}\frac{\sec^{2d}\left(\frac{z}{2}\right)}{2d}\right)w_{d_i}\cdot ... \cdot w_{d_n}
\end{align*}

and this final expression coincides with the contribution on the orbifold.
\end{proof}

\section{Gluing Open Invariants}\label{sec:glue}

In this section we develop rules for gluing open GW invariants to obtain closed GW invariants.  For non-orbifold invariants, we develop a general rule for gluing invariants from trivalent vertices with any compatible torus actions.  For orbifold invariants, we specialize to the case of the $\Z_2$ quotient with the specific torus action introduced in the previous sections.

\subsection{Non-orbifold Gluing}
\label{sec:gl}

In the spirit of the topological vertex [AKMV], we show in this section that the open invariants defined by Katz and Liu can be glued to obtain closed invariants of a smooth toric Calabi-Yau threefold.  Any smooth toric Calabi-Yau threefold can be equipped with a $\C^*$ action so that the three weights at any vertex of the web diagram sum to zero (Calabi-Yau weights).  The torus action can be lifted to the moduli space of stable maps and the fixed loci consist of maps which contract components to the vertices and map rational components to the compact edges via multiple covers fully ramified over the vertices.  The Gromov-Witten potential is then computed as a sum over contributions coming from these fixed loci.
  
Placing a Lagrangian along each compact edge of the web diagram, we can ``cut'' each fixed locus into a locus of open maps at each vertex.  In this section, we show that the contribution of the fixed locus to the usual Gromov-Witten potential can be obtained essentially by multiplying the corresponding open Gromov-Witten invariants.  The standard procedure for localization computations of Gromov-Witten invariants shows that the only thing we need to check is that the contribution from a multiple cover of a compact edge can be recovered from the disk contributions on each half-edge.  Specifically, we show that the degree $d$ multiple cover contribution of $\mathcal{O}_{\mathbb{P}^1}(-k)\oplus\mathcal{O}_{\mathbb{P}^1}(k-2)$ can be obtained essentially by multiplying winding $d$ disk contributions on each of the vertices.
  
\begin{proposition}
\label{prop:regglue}
Closed GW invariants of a smooth toric Calabi-Yau threefold are obtained by computing the open invariants at each vertex and then contract winding $d$ contributions along the edges with a factor of \\
$\begin{cases}
(-1)^{dk+1}d &\text{if the half-edges have the same orientation}\\
(-1)^{dk+d}d  &\text{if the half-edges have opposite orientation.}
\end{cases}$
\end{proposition}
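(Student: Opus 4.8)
The plan is to reduce the statement to a purely local computation along a single compact edge, since standard Atiyah--Bott localization already expresses the closed Gromov--Witten potential of a smooth toric Calabi--Yau threefold as a sum over graphs, with each graph contributing a product of vertex factors (contracted curves) and edge factors (multiple covers of the compact $\mathbb{P}^1$'s). The vertex factors are literally the same as those appearing in the open theory (push-pull of the tangent bundle, node smoothing against $\psi$ classes), so the only thing requiring proof is that the degree-$d$ edge factor of $\mathcal{O}_{\mathbb{P}^1}(-k)\oplus\mathcal{O}_{\mathbb{P}^1}(k-2)$ equals the product of the two winding-$d$ disk factors on the half-edges, up to the claimed universal constant. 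Concretely, I would first write the closed edge contribution as
\begin{equation*}
E_d(k)=\frac{1}{d}\cdot\frac{e\big(H^0(\mathbb{P}^1,\mathcal{O}(-k))\ \text{part}\big)\,e\big(H^1(\mathbb{P}^1,\mathcal{O}(k-2))\ \text{part}\big)}{e\big(H^0(\mathbb{P}^1,\mathcal{O}(2))\ \text{part}\big)}
\end{equation*}
with the appropriate equivariant weights coming from the two fixed points, keeping track of the $1/d$ automorphism and the two ``node'' contributions $\tfrac{t_0}{d}$, $\tfrac{t_\infty}{d}$ at the ramification points; here $t_0+t_\infty$-type relations from the Calabi--Yau condition must be used.

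Next I would write the disk factor at each half-edge, following Katz--Liu: a winding-$d$ disk along a half-edge whose normal directions restrict to $\mathcal{O}(-k)\oplus\mathcal{O}(k-2)$ contributes $\tfrac{1}{d}\,\dfrac{eH^1(N(d))}{eH^0(L(2d))}$, evaluated with the Riemann--Hilbert bundles as in Examples 3.4.3--3.4.4 of \cite{kl:oinv}. The key book-keeping is that the sections of $L(2d)$ are the involution-invariant sections of $H^0(\mathcal{O}(2d))$ and the sections of $N(d)$ are the involution-invariant sections of $H^1(\mathcal{O}(-d)\oplus\mathcal{O}(-d))$ — i.e. on the double, one recovers exactly half of the weights appearing in $E_d(k)$, but with a normalization of the equivariant parameter that differs by the relation $t_0=-t_\infty$ (the CY weights on the half-edge versus the full edge). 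I would then multiply the two half-edge disk factors, being careful that the orientation arrow on each half-edge dictates \emph{which} summand of the normal bundle is used to identify the $H^1$ sections, and that flipping an arrow introduces the global sign $(-1)^{d+1}$ recorded in the Orientation Convention discussion. Comparing $E_d(k)$ with (disk at $0$)$\times$(disk at $\infty$)$\times C$, one solves for the gluing constant $C$ and finds $C=(-1)^{dk+1}d$ when the arrows agree and $C=(-1)^{dk+d}d$ when they disagree; the extra $d$ reconciles the three automorphism/node factors $\tfrac1d,\tfrac{t}{d},\tfrac{t}{d}$ on the closed side against the two factors $\tfrac1d$ on the open side, and the sign $(-1)^{dk}$ is the discrepancy between $eH^1(\mathcal{O}(k-2))$ on the double and the product of the two $eH^1(N(d))$ local contributions.

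The main obstacle I anticipate is the careful matching of equivariant weights and of orientations: the disk function is \emph{defined locally} (by embedding a neighborhood of the edge into a resolved conifold), so there is no tautological identity identifying its weights with the global edge weights of $\mathcal{O}(-k)\oplus\mathcal{O}(k-2)$ — this is precisely the content the authors flag as non-trivial. I would handle this by computing both sides completely explicitly in terms of the single weight $t$ along the edge (using the CY conditions at both vertices to eliminate the transverse weights, which forces $\mathcal{O}(-k)\oplus\mathcal{O}(k-2)$ up to the freedom in $k$), reducing everything to elementary products of linear forms in $t$ and binomial-type coefficients, after which the claimed constant simply falls out. A secondary subtlety is confirming that the contribution of a \emph{node} at a vertex (shared between the edge cover and the contracted curve) is counted exactly once: the proof should assign it to the open/disk side, so the vertex factor in the gluing is the honest open invariant with no extra node insertion, matching the convention used in Sections~\ref{sec:openup} and~\ref{sec:opendown}. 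Finally I would remark, as the authors do, that this gluing check is parallel to the one in \cite[Appendix B]{df:gluing}.
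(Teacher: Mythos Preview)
Your proposal is correct and follows essentially the same route as the paper: reduce to a single compact edge, write the closed degree-$d$ multiple-cover contribution of $\mathcal{O}_{\mathbb{P}^1}(-k)\oplus\mathcal{O}_{\mathbb{P}^1}(k-2)$ explicitly, write the two winding-$d$ disk contributions explicitly, and compare. The paper carries this out with general Calabi--Yau weights $a,b$ at the vertices rather than specializing to a single parameter $t$ as you suggest; the closed contribution is massaged by cancelling a telescoping block of linear factors in the numerator against the denominator and reindexing, after which the ratio to the product of disk invariants is visibly $(-1)^{dk+1}d$, and the opposite-orientation case follows from the $(-1)^{d+1}$ you already identified.
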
  


\begin{figure}
	\centering
		\includegraphics[height=2.3cm]{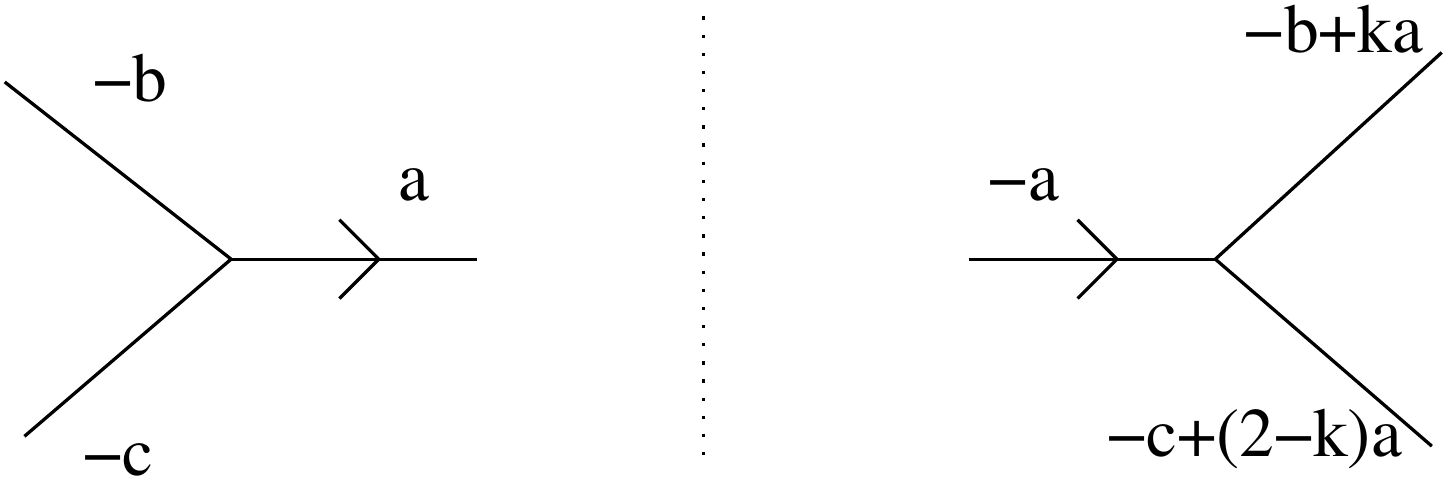}
	\caption{A general edge in the web diagram of a toric Calabi-Yau has normal bundle $\mathcal{O}_{\mathbb{P}^1}(-k)\oplus\mathcal{O}_{\mathbb{P}^1}(k-2)$.}
	\label{fig:disks}
\end{figure}

\begin{proof}
Figure \ref{fig:disks} gives arbitrary Calabi-Yau weights for a neighborhood of a general fixed line in a toric Calabi-Yau.
Assume $k$ and $a$ are positive.  One computes the winding $d$ disk invariant on the left vertex to be:
\begin{equation}
\frac{(-1)^{d+1}}{a^{d-1}d(d!)}\prod_{i=1}^{d-1}(bd-ai)
\end{equation}
and the winding $d$ disk invariant on the right vertex is:
\begin{equation}
\frac{1}{a^{d-1}d(d!)}\prod_{i=1}^{d-1}(bd-akd+ai).
\end{equation}

If we now multiply the two disk invariants, we get:
\begin{equation}
\label{dg}
\frac{(-1)^{d+1}}{a^{2d-2}d^2(d!)^2}\prod_{i=1}^{d-1}(bd-ai)(bd-akd+ai)
\end{equation}
We now compare \eqref{dg} with the contribution to the closed GW potential of $\mathcal{O}_{\mathbb{P}^1}(-k)\oplus\mathcal{O}_{\mathbb{P}^1}(k-2)$ given by a degree $d$ multiple cover. In the case $k=1$ we get immediately the same expression (up to the appropriate sign factor). For $k\geq 2$, the contribution is: 

\begin{equation}
\label{tc}
\frac{(-1)^{d+1}}{a^{2d-2}d(d!)^2}\frac{\prod_{i=1}^{dk-1}(-bd+ai)}{\prod_{i=0}^{d(k-2)}(d(b-a)-ia)}
\end{equation}

We can use the fact that 
\begin{equation}
bd-ai=d(b-a)-ja\Longleftrightarrow i=d+j.
\end{equation}
to write (\ref{tc}) as:

\begin{align*}
&\frac{(-1)^{d+1}(-1)^{dk-1}}{a^{2d-2}d(d!)^2}\prod_{i=1}^{d-1}(bd-ai)\frac{\prod_{i=d}^{dk-d}(bd-ai)}{\prod_{j=0}^{d(k-2)}(d(b-a)-ja)}\prod_{i=dk-d+1}^{dk-1}(bd-ai)\\
&=\frac{(-1)^{d+1}(-1)^{dk-1}}{a^{2d-2}d(d!)^2}\prod_{i=1}^{d-1}(bd-ai)(bd-a(dk-d+i))
\end{align*}

Reversing the index on the second term in the product:
\begin{equation}
\label{gw}
\frac{(-1)^{d(k+1)}}{a^{2d-2}d(d!)^2}\prod_{i=1}^{d-1}(bd-ai)(bd-akd+ai).
\end{equation}

Comparing (\ref{dg}) with (\ref{gw}) proves Proposition \ref{prop:regglue} when the half edges have the same orientation. We conclude the proof by remembering that changing the orientation affects the disk invariants by a factor of $(-1)^{d+1}$.
\end{proof}

\subsection{Orbifold Gluing}\label{sec:orbglue}
\label{sec:orbgl}

Gluing orbifold disks has another level of complexity arising from the twisting at the ramification points of the multiple covers.  At present, we simplify the scenario and show that we can glue disk contributions of $[\C^3/\Z_2]$ to obtain multiple cover contributions of $[\mathcal{O}(-1)\oplus\mathcal{O}(-1)/\Z_2]$ when we use  the weights in Figure \ref{orbwts}.

\begin{figure}
	\centering
		\includegraphics[height=2.2cm]{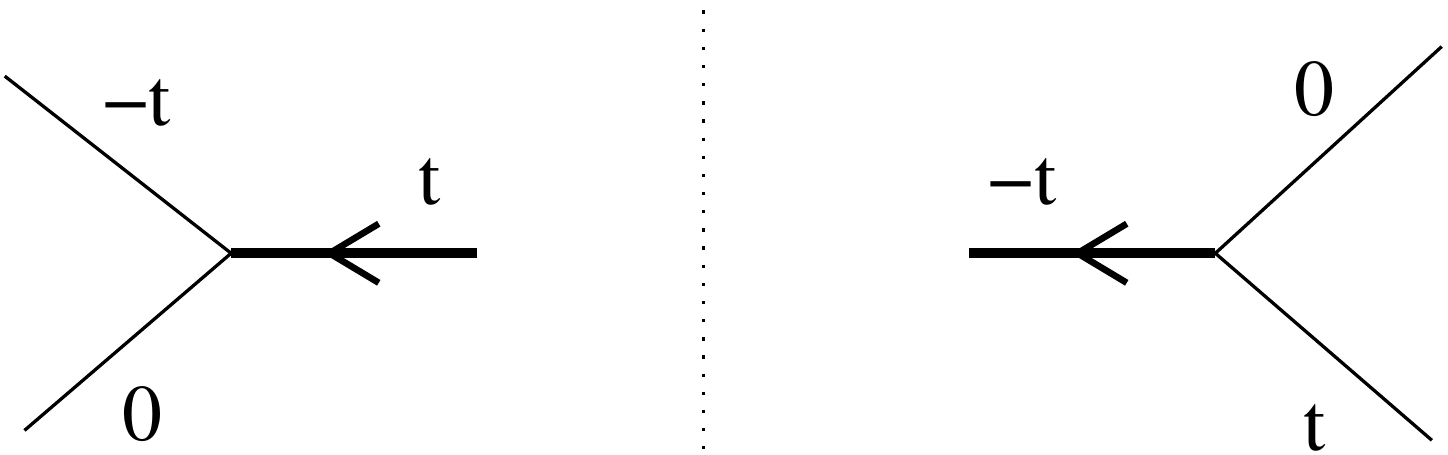}
	\caption{Special weights we use to check the gluing of orbifold disk invariants.}
	\label{orbwts}
\end{figure}

\begin{proposition}
\label{prop:orbg}
Orbifold GW invariants of $[\mathcal{O}(-1)\oplus\mathcal{O}(-1)/\Z_2]$ are obtained by contracting open invariants with same winding at each vertex and scaling the  orbifold Poincare' pairing by a factor of $(-1)^dd$.
\end{proposition}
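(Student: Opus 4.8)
The strategy mirrors the non-orbifold case treated in Proposition \ref{prop:regglue}: since a standard localization computation of the orbifold GW invariants of $[\mathcal{O}(-1)\oplus\mathcal{O}(-1)/\Z_2]$ expresses the potential as a sum over fixed loci, the only thing that is not already handled by the vertex contributions computed in Section \ref{sec:oogwi} is the contribution of a multiple cover of the compact edge connecting the two $\Z_2$-vertices. So the plan is: first, write down the edge in Figure \ref{orbwts} with its Calabi-Yau weights and compute directly, via Atiyah--Bott localization on the orbifold, the degree-$d$ multiple cover contribution to the closed orbifold potential of $[\mathcal{O}(-1)\oplus\mathcal{O}(-1)/\Z_2]$ — keeping careful track of the twisting forced at the two ramification points (both must be twisted, since a $\Z_2$-cover of $\mathbb{P}^1$ fully ramified over two points is exactly the degree-$d$ cover with the nontrivial monodromy split between the preimages of $0$ and $\infty$), the automorphism factor $1/(2d)$ of such a cover, and the equivariant Euler classes of $H^0$ and $H^1$ of the two line bundles restricted to the invariant sections. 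Second, extract from Proposition \ref{prop:orboppot} (or rather from the local disk computation feeding it) the winding-$d$ disk invariant at a single $[\C^3/\Z_2]$ vertex with the specialized weights of Figure \ref{orbwts}, at each of the two vertices. Third, multiply the two disk invariants and compare with the multiple-cover contribution, reading off the discrepancy: I expect it to be exactly the factor $(-1)^d d$ by which one must rescale the orbifold Poincaré pairing used to glue the two half-edges.

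\textbf{Key steps in order.} (1) Set up the edge: the normal bundle of the rigid $\mathbb{P}^1$ (or rather its coarse space, with the gerbe structure) is $\mathcal{O}(-1)\oplus\mathcal{O}(-1)$, and I record the three Calabi-Yau weights at each vertex from Figure \ref{orbwts}. (2) Localization on the cover: the degree-$d$ cover $C\to\mathbb{P}^1$ is a genus-$0$ orbifold curve with two $\Z_2$-stacky points; compute $e(H^1(C,f^*(\mathcal{O}(-1)\oplus\mathcal{O}(-1))))$ restricted to the subspace of $\Z_2$-invariant (i.e. descending) sections, divide by the node-smoothing/automorphism contributions, and assemble the multiple-cover term — this is the orbifold analogue of \eqref{tc}. (3) Identify the disk invariant at each vertex: from Section \ref{sec:oogwi}, a winding-$d$ disk with a twisted marked point at the origin contributes $\frac{1}{2d}\frac{(2d-1)!!}{(2d)!!}$ up to the weight-dependent factors coming from $H^0(L(2d))$ and $H^1(N(d))$ evaluated at the weights of Figure \ref{orbwts}; write both vertex contributions explicitly. (4) Take the product, compare with the output of step (2), and solve for the gluing factor; confirm it equals $(-1)^d d$, i.e. that this is precisely what rescales the orbifold pairing so that ``sum over fixed loci = sum over ways of gluing vertex contributions across the edge'' holds on the nose.

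\textbf{Main obstacle.} The delicate point is the bookkeeping of twisting and of the orbifold node/edge structure in step (2): unlike the non-orbifold case, the ramification points of the cover carry nontrivial $\Z_2$-isotropy, so the deformation space of the node (and hence the node-smoothing factor $\frac{1}{\frac{t}{2d}-\frac{\psi}{2}}$ appearing in Section \ref{sec:oogwi}) and the restriction of $R^1\pi_*f^*\mathcal{O}(-1)$ to invariant sections both pick up factors of $2$ and signs that must be reconciled exactly with the disk-side conventions — in particular the choice of orientation (the arrow convention of the Orientation Convention, fixed in Figure \ref{spaces}(d)) and the $(-1)^{d+1}$ orientation ambiguity must be threaded through consistently on both half-edges. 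Getting these factors of $2$ and the sign to combine into the clean answer $(-1)^d d$, rather than something off by a power of $2$ or a sign, is where essentially all the content of the proposition lies; once the weights are plugged in, the polynomial identity comparing the two sides is a short computation analogous to the one using $bd-ai = d(b-a)-ja \Leftrightarrow i = d+j$ in the proof of Proposition \ref{prop:regglue}.
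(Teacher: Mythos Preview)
Your overall architecture is right and matches the paper: compute the degree-$d$ multiple-cover contribution to the closed orbifold invariants directly, compute the product of the two winding-$d$ disk invariants, and read off the discrepancy. But there is a genuine gap in step (2). You assert that ``both must be twisted'' at the ramification points of the cover. This is false, and it causes you to miss half of the fixed loci.

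A degree-$d$ map from $\mathbb{P}^1$ to the gerby $\mathbb{P}^1$ inside $[\mathcal{O}(-1)\oplus\mathcal{O}(-1)/\Z_2]$, fully ramified over $0$ and $\infty$, comes in two flavours: either both ramification points carry the nontrivial $\Z_2$-twisting (the associated double cover of the source is connected, ramified over those two points), or neither does (the associated double cover is the trivial one, two disjoint copies of the source). Both types are honest torus-fixed loci in the closed theory and both contribute. On the open side, correspondingly, a winding-$d$ disk at a $[\C^3/\Z_2]$ vertex can have its origin twisted-marked (contributing $\frac{1}{2d}\frac{(2d-1)!!}{(2d)!!}$) or unmarked and untwisted (contributing $\frac{1}{2d^2}$); these are exactly the two disk types recorded in Section \ref{sec:oogwi}. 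The phrase ``scaling the orbifold Poincar\'e pairing'' in the statement means precisely that the gluing is a sum over sectors: twisted glues to twisted, untwisted glues to untwisted, each scaled by the same factor, and then the two are added. Your plan checks only the twisted-to-twisted piece.

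Concretely, the paper carries out your steps (2)--(4) \emph{twice}. For the twisted case one gets closed contribution $\frac{1}{2d}\bigl(\frac{(2d-1)!!}{(2d)!!}\bigr)^2$ versus the disk product $(-1)^d\bigl(\frac{1}{2d}\frac{(2d-1)!!}{(2d)!!}\bigr)^2$; for the untwisted case one gets closed contribution $\frac{1}{2d^3}$ versus the disk product $(-1)^d\bigl(\frac{1}{2d^2}\bigr)^2$. In both cases the ratio is $(-1)^d\cdot 2d$, which is the asserted rescaling of the pairing. With the specialized weights of Figure \ref{orbwts} (one weight is zero at each vertex) these are direct numerical comparisons --- no index-shifting identity of the $bd-ai=d(b-a)-ja$ type is needed, so your anticipated ``main obstacle'' largely evaporates, while the case-split you omitted is where the actual content sits.
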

\begin{remark}
The last part of proposition \ref{prop:orbg} means that open invariants with a twisted (resp. untwisted) origin on one side are multiplied with invariants with a twisted (resp. untwisted) origin on the other side of the edge, the product is scaled by $(-1)^d2d$. Then the two products are added to obtain the total contribution. 
\end{remark}
\begin{proof}
In section \ref{sec:openup} we computed disk invariants for the left vertex.  The right vertex with the given weights and orientation gives the same invariants multiplied by a factor of $(-1)^d$.  In order to glue two \textit{orbifold} disk invariants, we need to have matching windings and \textit{inverse} twisting at the ramification points.  For the $\Z_2$ case, this  means that both origins are twisted or untwisted.  The zero weight at each vertex reduces the circumstance to two cases, either the origins of the disks are marked and twisted, or they are both unmarked (hence, untwisted).  If we multiply two winding $d$ disk invariants twisted at the origin we get:
\begin{equation}
\label{tw}
(-1)^d\left(\frac{1}{2d}\frac{(2d-1)!!}{(2d)!!}\right)^2.
\end{equation}
On the other hand, if we multiply two winding $d$ disk invariants which are untwisted at the origin we get
\begin{equation}
\label{untw}
(-1)^d\left(\frac{1}{2d^2}\right)^2.
\end{equation}
We compare (\ref{tw}) and (\ref{untw}) to the contribution of $d:1$ covers of the twisted $\mathbb{P}^1$ in the orbifold $[\mathcal{O}(-1)\oplus\mathcal{O}(-1)/\Z_2]$.

First consider a $d:1$ cover fully ramified over $0$ and $\infty$ with twisted marks at the ramification points.  Since $f$ maps into  $\mathbb{P}^1\times\mathcal{B}\Z_2$, it classifies a double cover of the source curve fully ramified over the twisted marked points.  Pulling back the tangent bundle to this double cover and only considering the weights of $\Z_2$ invariant sections, we can compute the contribution as
\begin{equation}
\label{twcl}
\frac{1}{2d}\frac{eH^1(\mathcal{O}(-2d)\oplus\mathcal{O}(-2d))}{eH^0(\mathcal{O}(2d))}=\frac{1}{2d}\left(\frac{(2d-1)!!}{(2d)!!}\right)^2.
\end{equation}
where the $2d$ in the denominator corresponds to the global automorphisms of the covers.  Now consider a $d:1$ cover fully ramified over $0$ and $\infty$ with no marked points.  Such a map classifies a double cover of the source curve with no ramification (i.e. two disjoint copies of the source curve).  If we pull back the tangent bundle to the cover, then the $\Z_2$ invariant weights are the weights  for one of the disjoint copies.  Taking into account global automorphisms and the infinitesimal automorphisms at the ramified points of the source curve, one computes the contribution to be
\begin{equation}
\label{untwcl}
\frac{1}{2d}\frac{eH^1(\mathcal{O}(-d)\oplus\mathcal{O}(-d))}{eH^0(\mathcal{O}(2d))}\frac{t}{d}\frac{-t}{d}=\frac{1}{2d^3}.
\end{equation}
The proof is concluded by comparing (\ref{tw}) with (\ref{twcl}) and (\ref{untw}) with (\ref{untwcl}).
\end{proof}

\section{The Closed Crepant Resolution Conjecture via Gluing}\label{sec:closedres}

In this section we deduce the Bryan-Graber crepant resolution conjecture for the orbifold $\mathfrak{X}=[\mathcal{O}(-1)\oplus\mathcal{O}(-1)/\Z_2]$ and its crepant resolution $Y=\mathcal{K}_{\mathbb{P}^1\times\mathbb{P}^1}$ from the results of the previous sections.

We saw in section \ref{sec:orbglue} that there is symmetry in computing open invariants at the two vertices of $[\mathcal{O}(-1)\oplus\mathcal{O}(-1)/\Z_2]$ with the given $\C^*$ action.  In other words, the open potential for the right vertex in Figure \ref{orbwts} can be obtained from the open potential of the left vertex under the change of variables

\begin{align*}
z&\rightarrow \tilde{z}\\
w_d&\rightarrow -\tilde{w_d}\\
\end{align*}

\begin{remark}Throughout the rest of this section, variables with a tilde correspond to formal variables on the right sides of the diagrams.
\end{remark}

Refer to Figure \ref{fig:resol} for the resolution. Computing disk invariants for the right half of the diagram 
with the given orientations and weights leads to the exact same disk invariants computed in section \ref{sec:openup}.  Therefore, the open potential on the right can be obtained from the open potential on the left by the change of variables

\begin{figure}[tb]
	\centering
		\includegraphics[height=3.5cm]{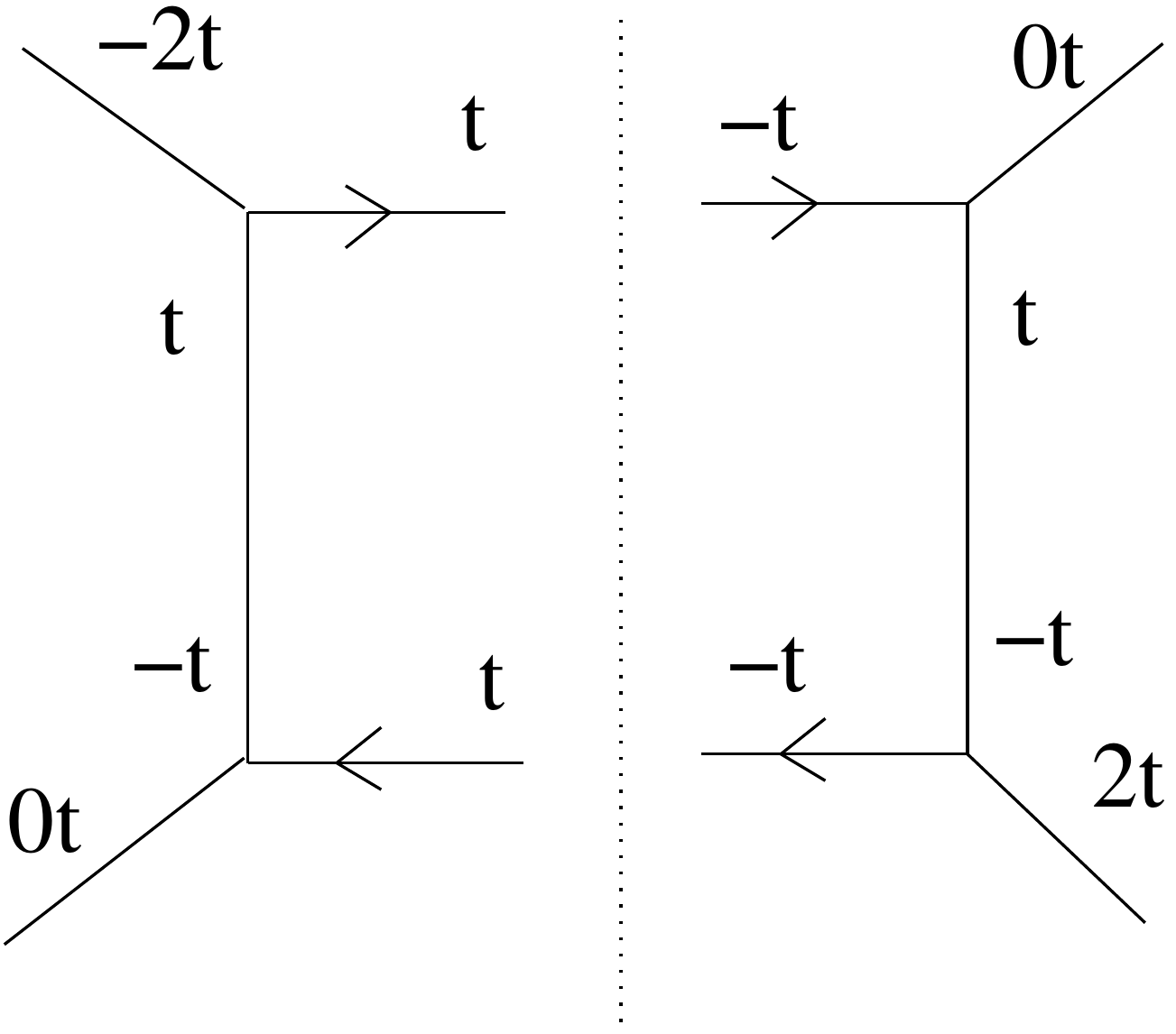}
	\caption{Symmetry in the open potentials of the two open sets of $K_{\proj^1\times\proj^1}$}
	\label{fig:resol}
\end{figure}

\begin{align}
q&\rightarrow \tilde{q} \nonumber \\
x&\rightarrow \tilde{x} \nonumber \\
y_d^{(b)}&\rightarrow \tilde{y}_d^{(t)} \nonumber \\
y_d^{(t)}&\rightarrow \tilde{y}_d^{(b)}
\end{align}

The setup for the crepant resolution conjecture is as follows.
The Chen-Ruan orbifold cohomology of $[\mathcal{O}(-1)\oplus\mathcal{O}(-1)/\Z_2]$ has two generators in degree $2$, the fiber over a point of $\mathbb{P}^1$ and the class of the twisted $\mathbb{P}^1$.  We  assign the formal variables $W$ and $Z$ to correspond to insertions of these classes, respectively.  Any map into the orbifold is classified by the degree on the twisted $\mathbb{P}^1$, thus we only need one degree variable $P$.  On the resolution, we have two insertion variables, corresponding to the fiber over a point in each $\mathbb{P}^1$, let these be $X$ and $Y$.  We also have two degree variables corresponding to the degree of a map on each $\mathbb{P}^1$; denote them  $Q$ and $U$, where $Q$ corresponds to the $\mathbb{P}^1$ which is dual to the divisor corresponding to $X$. 

\begin{theorem}
\label{thm:clcrc}
Under the change of variables
\begin{align}
&Q\rightarrow -1 \nonumber\\
&U\rightarrow -P\nonumber \\
&X\rightarrow iZ \nonumber \\
&Y\rightarrow iZ+W
\end{align}

the genus 0 GW potential of $\mathcal{K}_{\mathbb{P}^1\times\mathbb{P}^1}$ transforms to the genus 0 GW potential of $[\mathcal{O}(-1)\oplus\mathcal{O}(-1)/\Z_2]$ up to stable terms.
\end{theorem}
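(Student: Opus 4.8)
The strategy is to deduce the closed CRC for the pair $(\mathfrak{X},Y)$ by combining the open CRC (Theorem \ref{thm:ocrc}) with the gluing results (Propositions \ref{prop:regglue} and \ref{prop:orbg}), following the four-sided diagram in Figure \ref{square}. Concretely, both $Y = \mathcal{K}_{\mathbb{P}^1\times\mathbb{P}^1}$ and $\mathfrak{X} = [\mathcal{O}(-1)\oplus\mathcal{O}(-1)/\Z_2]$ are covered by two charts of the local models treated in Sections \ref{sec:openup}--\ref{sec:opendown}, and the symmetry of the open potentials under swapping the two vertices has already been recorded (the change of variables $y_d^{(b)}\leftrightarrow \tilde y_d^{(t)}$ on the resolution side, $w_d\to -\tilde w_d$ on the orbifold side). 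So the plan is: (1) write the genus-$0$ closed potential of $Y$ as a sum over localization graphs, grouped by edge winding profiles, and use Proposition \ref{prop:regglue} to express each such contribution as a product of the two open potentials of $\mathcal{K}_{\mathbb{P}^1}\oplus\mathcal{O}_{\mathbb{P}^1}$ (one per vertex) with the edge gluing factor $(-1)^{d+1}d$ (here $k=1$, and I must keep careful track of which half-edges are co-oriented, per Figure \ref{fig:resol}); (2) do the analogous thing for $\mathfrak{X}$ using Proposition \ref{prop:orbg}, gluing two copies of $OGW_{[\C^3/\Z_2]}$ with the scaled orbifold pairing $(-1)^d 2d$, summing the twisted-origin and untwisted-origin products; (3) apply the open CRC change of variables from Theorem \ref{thm:ocrc} simultaneously at both vertices, and check that it intertwines the two gluing prescriptions, thereby carrying the glued resolution potential to the glued orbifold potential.

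The key computation is to verify that the open change of variables
\begin{align*}
q\to -1,\quad x\to iz,\quad y_d^{(b)}\to \tfrac{i}{2}w_d,\quad y_d^{(t)}\to \tfrac{i}{2}w_d(-e^{iz})^d
\end{align*}
applied at each vertex, together with the identification of degree and insertion variables dictated by the geometry, reproduces exactly the closed change of variables $Q\to -1$, $U\to -P$, $X\to iZ$, $Y\to iZ+W$. The translation dictionary should be: the edge degree variable on the glued-up side becomes $q e^x$ at one vertex and (via the vertex-swap symmetry) its counterpart at the other vertex; since $q\to -1$ and $x\to iz$, the product $qe^x\to -e^{iz}$, which is precisely $U\to -P$ under $P = e^{iZ}$-type identifications, while $Q$, the degree variable \emph{not} propagating along the glued edge, inherits $q\to -1$ directly. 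The divisor variable $x\to iz$ at the first vertex gives $X\to iZ$; at the second vertex the divisor sees both the local insertion and the contribution from disks wrapping the glued edge, producing the shift $Y\to iZ+W$ — I expect this shift to be the manifestation, after gluing, of the $y_d^{(t)}$ versus $y_d^{(b)}$ asymmetry (the factor $(-e^{iz})^d$) in the open CRC. One must also confirm that the gluing factors match up: the resolution factor $(-1)^{d+1}d$ times the two disk normalizations must equal the orbifold factor $(-1)^d 2d$ times its disk normalizations once the variables $y^{(\cdot)}_d$ are replaced by their $w_d$ images, which is a short check using $\binom{2d-1}{d}/d$ versus $(2d-1)!!/(2d)!!$ and the powers of $2$.

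The main obstacle I anticipate is \emph{bookkeeping of signs, orientations, and the antiderivative ambiguities} rather than any conceptual difficulty. The $n=1$ (single boundary component / disk) terms involve antiderivatives defined only up to a constant, and these constants encode precisely the unstable terms that both Theorems \ref{thm:ocrc} and \ref{thm:clcrc} disclaim; I would need to argue that, after gluing, the genuinely \emph{stable} closed terms depend only on the well-defined (derivative) parts, so the ambiguity is harmless. Likewise, the orientation conventions on half-edges (Figure \ref{fig:resol}) interact with the $(-1)^{d+1}$ flip from the orientation convention in Section \ref{sec:prelims}, and with the vertex-swap change of variables $y_d^{(t)}\leftrightarrow y_d^{(b)}$; getting all of these to conspire correctly is the delicate part. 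A secondary subtlety is the closed (degree-zero, purely divisorial) sector: the leading cubic terms $-\tfrac{1}{12}x^3$ on $Y$ must match the $H(z)$-type hyperelliptic contribution on $\mathfrak{X}$ after the change of variables, and this is exactly the statement invoked at the start of the proof of Theorem \ref{thm:ocrc} from \cite[Section 3.2]{bg:crc}, applied now to the glued-up geometry; I would cite that, noting only that the two charts' cubic terms add with the correct combinatorial weights coming from $\mathbb{P}^1\times\mathbb{P}^1$.
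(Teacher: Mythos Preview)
Your overall strategy---glue open potentials on both sides of the diagram and invoke Theorem \ref{thm:ocrc} at each vertex---is exactly the paper's approach. But there is a real gap in your mechanism for matching the two glued expressions, and it is not the sign/orientation bookkeeping you flag.

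What you are missing is that the localization data for $Y=\mathcal{K}_{\mathbb{P}^1\times\mathbb{P}^1}$ is strictly richer than for $\mathfrak{X}$: each edge of a fixed-locus tree maps to either the \emph{top} or the \emph{bottom} horizontal $\mathbb{P}^1$, so the correct indexing is by pairs $(T,S)$ with $S\subseteq\{\text{edges}\}$ recording this choice. The two winding variables $y_d^{(t)},y_d^{(b)}$ in $OGW_{\mathcal{K}_{\mathbb{P}^1}\oplus\mathcal{O}_{\mathbb{P}^1}}$ exist precisely to track it, and gluing contracts $y_d^{(t)}$ with $\tilde y_d^{(t)}$ for $e\in S$, $y_d^{(b)}$ with $\tilde y_d^{(b)}$ for $e\notin S$. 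The orbifold has no such dichotomy. The paper's key computation is that under the substitutions of Theorem \ref{thm:ocrc} the vertex contribution $V^{(S)}(v)$ becomes $\tfrac{1}{2^{n(v)}}V(v)$ \emph{independently of} $S$ (for $v$ non-univalent), while each edge factor becomes $2E(e)$; summing over the $2^{\#\text{edges}}$ subsets $S$ then produces exactly the compensating factor. Your proposed check that ``$(-1)^{d+1}d$ times disk normalizations equals $(-1)^d2d$ times disk normalizations via $\binom{2d-1}{d}/d$ versus $(2d-1)!!/(2d)!!$'' cannot close this: those two expressions are \emph{equal}, not off by $2$, and the extra $2$ per edge genuinely comes from the sum over $S$. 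Two smaller corrections: the identification ``$P=e^{iZ}$'' is wrong ($P$ is the orbifold degree variable dual to $W$, and $(Ue^Y)^d\to(-Pe^{iZ+W})^d$ with the $e^{idZ}$ absorbed by the winding-variable substitutions in the edge denominator); and the univalent vertices need separate handling, since the $\Gamma'$ terms $\tfrac{1}{d^2}y_d^{(b)}$ produce extra pieces $\pm\tfrac{i}{4d^2}w_d$ that cancel only after pairing $e\in S$ against $e\notin S$---this is also how the single-edge tree picks up the residual $\tfrac{1}{2d^3}(Pe^W)^d$ matching the untwisted-origin gluing on the orbifold.
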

First we express the two potentials as a sum over the same set of
 decorated trees.  We  then describe how one can extract the contribution to the GW potential from each  tree by multiplying vertex and edge contributions. The open crepant resolution statement proved in section \ref{sec:openres} verifies that the change of variables equates the vertex contributions and edge contributions.

Since the portion of the computation corresponding to degree $0$ maps into the orbifold is immediate from the closed computation done in section \ref{sec:openres}, we focus on contributions with nonzero powers of $U$ and $P$.

\subsection{Closed Invariants of $[\mathcal{O}(-1)\oplus\mathcal{O}(-1)/\Z_2]$}
\label{sec:clinvorb}

The closed potential of the orbifold can be expressed as a sum over localization trees:
\begin{itemize}
\item black (white) vertices of the tree correspond to components contracting to  the left (right) orbifold vertex;
\item edges of the tree correspond to multiple covers of the twisted $\mathbb{P}^1$ obtained by gluing disks.  Each edge is decorated with a positive integer denoting the degree of the multiple cover.
\end{itemize}

By the gluing results of section \ref{sec:orbglue}, closed GW invariants of the orbifold are obtained by gluing open invariants along half edges. For a given localization tree $T$ with more than one edge,   the corresponding contribution to the GW potential is  given by

\begin{equation}
GW_{\mathfrak{X}}(T)=\prod_{\text{black vertices}}\hspace{-.5cm}V(v)\hspace{.1cm}\prod_{\text{edges }e}E(e)\prod_{\text{white vertices}}\hspace{-.5cm}\tilde{V}(v)
\end{equation}

In the above formula, $V(v)$ and $\tilde{V}(v)$ are the open invariants with winding profile corresponding to the edges meeting at $v$ (with the formal variables $z$ and $\tilde{z}$  replaced with $Z$).  In the case that $v$ is univalent, only the contribution from disks with twisted origin is taken. 
The edge contribution is:
\begin{equation}
E(e)=\frac{(-1)^d2d(Pe^W)^d}{w_d\tilde{w}_d}.
\end{equation}
where $e$ is an edge marked with $d$.  The $Pe^W$ is from applying the divisor equation to the new divisor class obtained by gluing and the $(-1)^d2d$ is the gluing factor of section \ref{sec:orbglue}.

In the case that $T'$ is the tree with a unique edge labeled $d$, then one must also take into account the contribution from gluing two unmarked disks.  The contribution in this case is
\begin{equation}
GW_{\mathfrak{X}}(T')=V(v_1)E(e)\tilde{V}(v_2)+\frac{1}{2d^3}(Pe^W)^d.
\end{equation}

\subsection{Closed invariants of $\mathcal{K}_{\mathbb{P}^1\times\mathbb{P}^1}$} 
Again, the Gromov Witten potential is expressed as a sum over localization graphs. For each graph, collapsing all ``vertical" edges (i.e. edges corresponding to multiple covers of the vertical fixed fibers) produces essentially a tree as in section \ref{sec:clinvorb}, with the extra decoration of a subset  $S$ of the edges corresponding to edges mapping to the top invariant line. We forget this extra decoration to organize the potential as a sum over the same trees of section \ref{sec:clinvorb}.





By the results in section \ref{sec:gl},  the contribution to the GW potential from all loci corresponding to a given decorated tree $T$ is:

\begin{equation}
GW_{Y}(T)=\sum_{S\subset \{\text{edges}\}}\left(\prod_{\text{black vertices}}\hspace{-.5cm}V^{(S)}(v)\hspace{.1cm}\prod_{\text{edges }e}E'(e)\prod_{\text{white vertices}}\hspace{-.5cm}\tilde{V}^{(S)}(v)\right)
\end{equation}

In the above formula, $V^{(S)}(v)$ and $\tilde{V}^{(S)}(v)$ are the open GW contributions from all fixed loci with winding profile determined by the edges meeting $v$ (we replace the formal variables $q,\tilde{q}$ with $Q$ and $x,\tilde{x}$ with $X$).  If an adjacent edge is in $S$, this corresponds to a disk mapping to the upper Lagrangian and vice versa.  Also
\begin{equation}
E'(e)=
\begin{cases}
\frac{-d(Ue^Y)^d}{y_d^{(t)}\tilde{y}_d^{(t)}} &\text{if } e\in S\\
\frac{-d(Ue^Y)^d}{y_d^{(b)}\tilde{y}_d^{(b)}} &\text{if } e\notin S
\end{cases}
\end{equation}  
where $e$ is an edge labeled with $d$.  The $-d$ is the gluing factor of section \ref{sec:gl} and the $Ue^Y$ comes from applying the divisor equation to the new divisor class created by gluing.

Let $V'(v)$ and $\tilde{V}'(v)$ denote the open contributions corresponding to all fixed loci with winding profile $(d_1,...,d_n)$ given by the edges $(e_1,...,e_n)$ meeting $v$ (summing over all possibilitiees for the disks to map to the top edge or the bottom edge). Undoing \eqref{undo}, we have:

\begin{center}
$V^{(S)}(v)=
\begin{cases}
\frac{y_{d}^{(t)}}{y_{d}^{(t)}+y_{d}^{(b)}(Qe^X)^{d}}\left(V'(v)-\frac{1}{d^2}y_d^{(b)}\right) &v \text{ univalent, }e\in S\\
\frac{y_{d}^{(b)}(Qe^X)^d}{y_{d}^{(t)}+y_{d}^{(b)}(Qe^X)^{d}}\left(V'(v)-\frac{1}{d^2}y_d^{(b)}\right)+\frac{1}{d^2}y_d^{(b)} &v \text{ univalent, }e\notin S\\
\frac{\prod_{e_i\in S}y_{d_i}^{(t)}\prod_{e_i\notin S}y_{d_i}^{(b)}(Qe^X)^{d_i}}{\prod_{i=1}^n\left(y_{d_i}^{(t)}+y_{d_i}^{(b)}(Qe^X)^{d_i}\right)}V'(v) &\text{else}
\end{cases}
$
\end{center}
and
\begin{center}
$\tilde{V}^{(S)}(v)=
\begin{cases}
\frac{\tilde{y}_{d}^{(t)}(Qe^X)^d}{\tilde{y}_{d}^{(t)}+\tilde{y}_{d}^{(b)}(Qe^X)^{d}}\left(V'(v)-\frac{1}{d^2}\tilde{y}_d^{(t)}\right)+\frac{1}{d^2}\tilde{y}_d^{(t)} &v \text{ univalent, }e\in S\\
\frac{\tilde{y}_{d}^{(b)}}{\tilde{y}_{d}^{(t)}+\tilde{y}_{d}^{(b)}(Qe^X)^{d}}\left(V'(v)-\frac{1}{d^2}\tilde{y}_d^{(t)}\right) &v \text{ univalent, }e\notin S\\
\frac{\prod_{e_i\in S}\tilde{y}_{d_i}^{(t)}(Qe^X)^{d_i}\prod_{e_i\notin S}\tilde{y}_{d_i}^{(b)}}{\prod_{i=1}^n\left(\tilde{y}_{d_i}^{(b)}+\tilde{y}_{d_i}^{(t)}(Qe^X)^{d_i}\right)}V'(v) &\text{else}
\end{cases}
$
\end{center}
\begin{remark}
In each of the above formulas for the vertex contributions, the third case is the generic case and the other two are adjusted to take into account the $\Gamma'$ loci of \eqref{OGW1}.
\end{remark}

\subsection{The Crepant Resolution Transformation}

In order to verify the Bryan-Graber crepant resolution conjecture, we  show that after the prescribed change of variables,
\begin{equation}
GW_{Y}(T)\rightarrow GW_{\mathfrak{X}}(T)
\end{equation}
for every decorated tree $T$.

Even though our formulas for the vertex and edge contributions of $GW_{\mathfrak{X}}(T)$ and $GW_{Y}(T)$ involve winding variables, these variables cancel in the product.  
Hence we can make any substituion for the winding variables and it does not affect the overall product.  Motivated by the open crepant resolution transformation, in the above formulas for $GW_{Y}(T)$ we make the substitutions:
\begin{align*}
y_d^{(b)}&\rightarrow\frac{i}{2}w_d &\tilde{y}_d^{(b)}&\rightarrow\frac{i}{2}(e^{iZ})^d\tilde{w}_d\\
y_d^{(t)}&\rightarrow\frac{i}{2}(-e^{iZ})^dw_d &\tilde{y}_d^{(t)}&\rightarrow(-1)^d\frac{i}{2}\tilde{w}_d\\
Q & \rightarrow -1 & U & \rightarrow -P\\
X & \rightarrow iZ & Y & \rightarrow iZ+W
\end{align*}

By Theorem \ref{thm:ocrc}, under this change of variables $V'(v)\rightarrow V(v)$ and $\tilde{V}'(v)\rightarrow\tilde{V}(v)$.  So for any $S\subset\{\text{edges}\}$, we have:
\begin{equation}
\label{v1}
V^{(S)}(v)\rightarrow
\begin{cases}
\frac{1}{2}V(v)-\frac{i}{4d^2}w_d &v\text{ univalent, }e\in S\\
\frac{1}{2}V(v)+\frac{i}{4d^2}w_d &v\text{ univalent, }e\notin S\\
\frac{1}{2^n}V(v) &\text{else}
\end{cases}
\end{equation}
and similarly,
\begin{equation}
\label{v2}
\tilde{V}^{(S)}(v)\rightarrow
\begin{cases}
\frac{1}{2}\tilde{V}(v)+\frac{i}{4d^2}\tilde{w}_d &v\text{ univalent, }e\in S\\
\frac{1}{2}\tilde{V}(v)-\frac{i}{4d^2}\tilde{w}_d &v\text{ univalent, }e\notin S\\
\frac{1}{2^n}\tilde{V}(v) &\text{else}
\end{cases}
\end{equation}

Also, under the change of variables 
\begin{equation}
\label{ed}
E'(e)\rightarrow 2E(e).
\end{equation}  

Given any tree $T$ with more than one edge, the extra terms on the univalent vertices cancel by summing over all contributions $e\in S$ and $e\notin S$.  Therefore, from \eqref{v1},\eqref{v2} and \eqref{ed};
\begin{align}
\label{gentree}
GW_{Y}(T)&= & \nonumber\\
\sum_{S\subset\{\text{edges}\}}\prod\frac{1}{2}V(v)\prod 2E(e) \prod\frac{1}{2}\tilde{V}(v)&=2^{\#\{\text{edges}\}}\frac{\prod V(v)\prod E(e) \prod\tilde{V}(v)}{2^{\#\{\text{edges}\}}}\nonumber\\
&=GW_{\mathfrak{X}}(T).
\end{align}
If $T'$ is the tree with a unique edge labeled $d$: 
\begin{equation}
\label{spectree}
GW_{Y}(T')=V(v_1)E(e)\tilde{V}(v_2)+\frac{1}{2d^3}(Pe^Y)^d=GW_{\mathfrak{X}}(T').
\end{equation}
Equations \eqref{gentree} and \eqref{spectree} establish Theorem \ref{thm:clcrc}.

\bibliographystyle{alpha}
\bibliography{biblio}

\end{document}